\theoremstyle{plain}
\newtheorem{theorem}{Theorem}[section]
\newtheorem{lemma}[theorem]{Lemma}
\newtheorem{proposition}[theorem]{Proposition}
\newtheorem{corollary}[theorem]{Corollary}
\theoremstyle{definition}
\newtheorem{definition}[theorem]{Definition}
\newtheorem{remark}[theorem]{Remark}
\newtheorem{example}[theorem]{Example}
\newtheorem{quest}[theorem]{Question}
\DeclareMathOperator{\A}{\mathcal{A}}
\DeclareMathOperator{\CO}{\mathcal{O}}
\numberwithin{equation}{section}
\begin{document}

\title[]{Formality of the Dolbeault complex and deformations of holomorphic Poisson manifolds}

\author[Y. Chen]{Youming Chen }
\address{School of Science, Chongqing University of Technology, Chongqing 400054, PR China}%
\email{youmingchen@cqut.edu.cn}%


\begin{abstract}
The purpose of this paper is to study the properties of
holomorphic Poisson manifolds $(M,\pi)$
under the assumption of $\partial_{}\bar{\partial}$--lemma or
$\partial_{\pi}\bar{\partial}$--lemma.
Under these assumptions,
we show that the Koszul--Brylinski homology can be recovered by
the Dolbeault cohomology,
and prove that the DGLA
$(A_M^{\bullet,\bullet},\bar{\partial},[-,-]_{\partial_\pi})$ is formal.
Furthermore,
we discuss the Maurer--Cartan elements of $(A_M^{\bullet,\bullet}[[t]],\bar{\partial},[-,-]_{\partial_\pi})$
which induce the deformations of complex structure of $M$.
\end{abstract}

\maketitle

\setcounter{tocdepth}{1}


\section{Introduction}
In the past decades,
as important properties,
the $\partial_{}\bar{\partial}$--lemma and its variation has been extensively studied
in complex geometry, symplectic geometry and generalized complex geometry;
see \cite{DGMS75,BK98,Merkulov98,Man99,Cav06,TY12-1,TY12-2, AT13, AT15,TY16,YY20} etc.
Among these works,
a profound one of
Deligne, Griffiths, Morgan, and Sullivan
is that if a compact complex manifold satisfies the $\partial_{}\bar{\partial}$--lemma,
then it is formal.
In symplectic geometry,
a variation of $\partial_{}\bar{\partial}$--lemma,
called $d\delta$--lemma,
also attracts many attention:
for a compact symplectic manifold,
it satisfies the $d\delta$--lemma if and only if the Hard Lefschetz condition holds,
and in this case the symplectic manifold is formal,
and its de Rham cohomology admits a structure of Frobenius manifold.

In this paper we consider the holomorphic Poisson manifold.
Originally, Poisson structures arise from Hamiltonian system of classical dynamics.
In many situations, the Poisson structures are actually
holomorphic; see \cite{BZ99,Hit06,L-GSX08,Go10,Hit12,CFP16,CGP15} etc.
In particular, holomorphic Poisson structures are closely related to generalized complex geometry.
On the one hand, a holomorphic Poisson structure naturally defines a generalized
complex structure of special type; see \cite{Gu11}.
On the other hand, by Bailey's local classification theorem of generalized complex structures,
each generalized complex manifold is locally equivalent to the product of a symplectic manifold
and a holomorphic Poisson manifold; see \cite{Ba13}.
We refer the readers to \cite{Hit03,Hit06,Fu05,L-GSX08,Go10,CSX10,Gu11,BX15,BCV19}
and references therein for more results on the applications of holomorphic
Poisson structures
in generalized complex geometry and the relationships with other geometries.

Assume that $(M,\pi)$ is a compact holomorphic Poisson manifold.
The Koszul--Brylinski operator $\partial_{\pi}:=\iota_{\pi}\partial-\partial\iota_{\pi}$,
studied independently by Koszul \cite{Kos84} and Brylinski \cite{Bry88},
has many properties.
The $k$--th Koszul--Brylinski homology $H_{k}(M, \pi)$ of $(M,\pi)$
is defined as the $k$-th hypercohomology of holomorphic Koszul--Brylinski complex
$$
  \cdots \stackrel{\partial_{\pi}}{\to}\Omega_{M}^{p+1} \stackrel{\partial_{\pi}}{\to}
      \Omega_{M}^{p} \stackrel{\partial_{\pi}}{\to} \Omega_{M}^{p-1} \stackrel{\partial_{\pi}}{\to} \cdots.
$$
Moreover, as a BV operator,
$\partial_{\pi}$ generates a Lie bracket $[-,-]_{\partial_\pi}$ (defined by \ref{Lie str on forms1}).
Most notably, there exists a holomorphic version of Evens--Lu--Weinstein duality
for $H_{\bullet}(X,\pi)$,
which is  a generalization of Serre duality for Dolbeault cohomology; see \cite[Theorem 4.4]{Sti11}.
Furthermore, there is a canonical Fr\"{o}hlicher--type spectral sequence, called the
Dolbeault--Koszul--Brylinski spectral sequence (see Definition \ref{Dol-Poi-spectral-seq}),
which converges to $H_{\bullet}(X,\pi)$.
However, in general,
it is not easy to compute the holomorphic
Koszul--Brylinski homology for a specific holomorphic Poisson manifold.
As far as we know, only some particular class are calculated(see \cite{Hon19,HX11,Sti11}).
Essentially, the Koszul--Brylinski homology and the algebraic structures on it
naturally depend both on the Poisson structures
and complex structures, but the Poisson structures are some kind complicated.
Thus, a natural question, motivated by Brylinski \cite{Bry88}, arises now is:

\begin{quest}\label{pro}
What conditions on a holomorphic Poisson manifold $(M,\pi)$
ensure the degeneracy of $E_1$--page of the
Dolbeault--Koszul--Brylinski spectral sequence?
\end{quest}

Motivated by the works of $\partial_{}\bar{\partial}$--lemma above
and the close relations between holomorphic
Poisson geometry and symplectic geometry, generalized complex geometry,
we consider the $\partial_{}\bar{\partial}$--lemma and $\partial_{\pi}\bar{\partial}$--lemma
on holomorphic Poisson manifold.
Our first main result states as follows.
\begin{theorem}\label{main-theorem1}
Let $(M,\pi)$ be a holomorphic Poisson manifold.
If $M$ satisfies the $\partial_{}\bar{\partial}$--lemma or $\partial_{\pi}\bar{\partial}$--lemma,
then its Dolbeault--Koszul--Brylinski spectral sequence degenerates at $E_{1}$--page:
$$
H_{k}(M, \pi)
\cong
\bigoplus_{p-q=n-k}H^{p,q}_{\bar{\partial}}(M).
$$
\end{theorem}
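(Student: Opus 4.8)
The plan is to mimic, in the holomorphic Poisson setting, the classical argument that the Frölicher spectral sequence of a compact complex manifold degenerates at $E_1$ when the $\partial\bar\partial$–lemma holds. The Dolbeault–Koszul–Brylinski spectral sequence arises from the double complex $(A_M^{\bullet,\bullet}, \bar\partial, \partial_\pi)$ — smooth $(p,q)$–forms with the Dolbeault operator $\bar\partial$ in one direction and the Koszul–Brylinski operator $\partial_\pi$ in the other — by filtering, say, by the holomorphic degree $p$; its $E_1$–page is the $\bar\partial$–cohomology $H^{p,q}_{\bar\partial}(M)$ with differential induced by $\partial_\pi$, and it abuts to the total cohomology, which by a hypercohomology/hyper-resolution argument computes $H_\bullet(M,\pi)$. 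So degeneration at $E_1$ is exactly the vanishing of all higher differentials $d_r$, $r\ge 1$, on this page, together with the identification of the total cohomology with $\bigoplus_{p-q=n-k}H^{p,q}_{\bar\partial}(M)$.

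**First** I would set up the two relevant "lemmas" as formal statements about the bicomplex: the $\partial\bar\partial$–lemma says $\ker\partial\cap\ker\bar\partial\cap(\operatorname{im}\partial+\operatorname{im}\bar\partial) = \operatorname{im}\partial\bar\partial$, and I expect that on a holomorphic Poisson manifold this classical $\partial\bar\partial$–lemma (for the ordinary $\partial$) together with the identity $\partial_\pi = \iota_\pi\partial - \partial\iota_\pi$ implies the analogous $\partial_\pi\bar\partial$–lemma; conversely the $\partial_\pi\bar\partial$–lemma is taken as a hypothesis directly. The standard homological fact — due in this generality to Deligne–Griffiths–Morgan–Sullivan, and abstracted by Deligne–Illusie / Neisendorfer–Taylor into the "principle of two types" — is that a bounded double complex satisfying the relevant $\partial\bar\partial$–type lemma has the property that both of its spectral sequences degenerate at $E_1$ and moreover the induced map from $\partial_\pi$–cohomology of $\bar\partial$–harmonic representatives (equivalently, the $E_\infty$–page) injects into total cohomology with the expected dimensions. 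So the core of the proof is: (1) verify the $\partial_\pi\bar\partial$–lemma holds under either hypothesis; (2) invoke the abstract bicomplex degeneration principle to get $E_1 = E_\infty$; (3) unwind the bigrading convention so that the total degree $n-k$ piece of $E_\infty$ matches $\bigoplus_{p-q=n-k} H^{p,q}_{\bar\partial}(M)$ and the total cohomology in that degree is $H_k(M,\pi)$.

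**The main obstacle** I anticipate is step (1) in the case where only the ordinary $\partial\bar\partial$–lemma is assumed: one must show it propagates to $\partial_\pi\bar\partial$. Here the point is that $\partial_\pi$ is built from $\partial$ by conjugating with $\iota_\pi$, and $\iota_\pi$ is a (degree $-2$ in holomorphic degree) operator commuting with $\bar\partial$ since $\pi$ is holomorphic; one then needs that an algebraic operator commuting with $\bar\partial$ and intertwining $\partial$ with $\partial_\pi$ transports the "principle of two types" — this is essentially the observation (used already by Mathieu and by Merkulov in the symplectic $d\delta$ case) that $e^{\iota_\pi}$ or the relevant deformation of $\partial$ does not destroy the lemma. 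I would make this precise either by a direct diagram chase — given $\alpha$ that is $\partial_\pi$–closed, $\bar\partial$–closed and $\partial_\pi$–or–$\bar\partial$–exact, push it through $\iota_\pi$-filtration degree by degree and apply the ordinary lemma on each graded piece — or, more slickly, by noting that the two double complexes $(A^{\bullet,\bullet},\bar\partial,\partial)$ and $(A^{\bullet,\bullet},\bar\partial,\partial_\pi)$ are related by a filtered quasi-isomorphism. The remaining steps are then formal: the degeneration principle for bicomplexes is standard once the lemma is in hand, and the identification with $H_\bullet(M,\pi)$ is just the definition of the Dolbeault–Koszul–Brylinski spectral sequence recalled in Definition \ref{Dol-Poi-spectral-seq}, so I would state those as lemmas and spend the real work on propagating the $\partial\bar\partial$–lemma to $\partial_\pi$.
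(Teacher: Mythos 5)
Your framework (the double complex, the identification of the $E_1$--page, the appeal to the DGMS ``two types'' principle once a $\partial_\pi\bar{\partial}$--type lemma is available) is fine for the second hypothesis, and in that branch your argument is essentially the paper's Theorem \ref{deg2} combined with Theorem \ref{lem1} to convert $H_{\partial_\pi}^{\bullet,\bullet}$ into $H_{\bar{\partial}}^{\bullet,\bullet}$. The genuine gap is your step (1): the ordinary $\partial\bar{\partial}$--lemma does \emph{not} imply the $\partial_\pi\bar{\partial}$--lemma, so reducing the first hypothesis to the second cannot work. The cleanest counterexample is $\pi=0$ (allowed here, and representative of what happens whenever $\pi$ is degenerate): then $\partial_\pi=0$ and the $\partial_\pi\bar{\partial}$--lemma would assert $\ker\bar{\partial}\cap\mathrm{im}\,\bar{\partial}=0$, i.e.\ $\mathrm{im}\,\bar{\partial}=0$, which fails on every compact complex manifold of positive dimension, while a K\"ahler manifold certainly satisfies the $\partial\bar{\partial}$--lemma. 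The paper makes exactly this point in the remark following Theorem \ref{deg2} and in Section \ref{c6}: the two hypotheses are logically independent. Your proposed repairs do not close the gap either: a degree-by-degree chase along the $\iota_\pi$--filtration would still have to manufacture a $\partial_\pi\bar{\partial}$--primitive, which the counterexample forbids; and $e^{\iota_\pi}$ intertwines $\partial$ with $\partial+\partial_\pi$ (Lemma \ref{f1}), not with $\partial_\pi$, and shifts the holomorphic degree, so it does not furnish a filtered isomorphism between $(A_M^{\bullet,\bullet},\bar{\partial},\partial)$ and $(A_M^{\bullet,\bullet},\bar{\partial},\partial_\pi)$.

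The $\partial\bar{\partial}$--case has to be handled directly, as the paper does: the $\partial\bar{\partial}$--lemma lets you choose a representative $\alpha$ of each Dolbeault class with $\partial\alpha=0$ (and $\bar{\partial}\alpha=0$); for such $\alpha$, since $\iota_\pi$ commutes with $\bar{\partial}$, one gets $\partial_\pi\alpha=-\partial(\iota_\pi\alpha)=\pm\, d(\iota_\pi\alpha)$, so $\partial_\pi\alpha\in\ker\partial\cap\ker\bar{\partial}\cap\mathrm{im}\,d$, hence by the $\partial\bar{\partial}$--lemma it is $\partial\bar{\partial}$--exact and in particular $\bar{\partial}$--exact, killing the $E_1$--differential (and the same choice of bi-closed representatives disposes of the higher differentials). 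Replace your step (1) by this argument and keep your $\partial_\pi\bar{\partial}$--branch as it stands.
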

Notice that if $M$ is a projective manifold or K\"{a}hler manifold,
then it automatically satisfies the $\partial\bar{\partial}$--lemma,
and hence the Theorem \ref{main-theorem1} is applicable to these situations.

The Lie bracket $[-,-]_{\partial_\pi}$ on the Dolbeault complex of $M$ which is generated by the
Koszul--Brylinski operator $\partial_{\pi}$,
is in fact compatible
with the Dolbeault operator,
i.e., the Dolbeault complex of $M$ admits a DGLA structure.
Analogous to the symplectic case,
we consider the formality of such DGLA.
\begin{theorem}\label{main-theorem2}
Suppose that $(M,\pi)$ is a holomorphic Poisson manifold.
If $(M,\pi)$ satisfies the $\partial_{\pi}\bar{\partial}$--lemma,
then the DGLA
$(A_M^{\bullet,\bullet},\bar{\partial},[-,-]_{\partial_\pi})$ is formal.
\end{theorem}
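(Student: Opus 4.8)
The plan is to run, in the DGLA setting, the classical ``principle of two types'' argument of Deligne, Griffiths, Morgan and Sullivan \cite{DGMS75}, using $\partial_{\pi}$ as the auxiliary differential in place of $\partial$. Recall that on $A_M^{\bullet,\bullet}$ the operators $\bar{\partial}$ and $\partial_{\pi}$ anticommute and satisfy $\bar{\partial}^{2}=\partial_{\pi}^{2}=0$ (the latter because $[\pi,\pi]=0$), and that $\partial_{\pi}$ is the Batalin--Vilkovisky operator generating the bracket $[-,-]_{\partial_\pi}$; since $\partial_{\pi}^{2}=0$, the standard BV identities show that $\partial_{\pi}$ is a graded derivation of $[-,-]_{\partial_\pi}$. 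Consequently $C:=\ker\partial_{\pi}$ is a sub-DGLA of $L:=(A_M^{\bullet,\bullet},\bar{\partial},[-,-]_{\partial_\pi})$ --- it is $\bar{\partial}$--stable because $\bar{\partial}$ anticommutes with $\partial_{\pi}$, and closed under $[-,-]_{\partial_\pi}$ because $\partial_{\pi}$ derives that bracket --- while $\operatorname{im}\partial_{\pi}\subset C$ is a $\bar{\partial}$--stable Lie ideal of $C$. Hence $H:=\ker\partial_{\pi}/\operatorname{im}\partial_{\pi}$ inherits a DGLA structure, with differential induced by $\bar{\partial}$ and bracket induced by $[-,-]_{\partial_\pi}$, and the inclusion and quotient maps
\[
L\ \xleftarrow{\,\iota\,}\ C\ \xrightarrow{\,p\,}\ H
\]
are morphisms of DGLAs.

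The heart of the argument is to show, using the $\partial_{\pi}\bar{\partial}$--lemma, that $\iota$ and $p$ are quasi-isomorphisms and that the induced differential on $H$ vanishes. First I would check that $(\operatorname{im}\partial_{\pi},\bar{\partial})$ is acyclic: if $x=\partial_{\pi}y$ with $\bar{\partial}x=0$, then $x\in\operatorname{im}\partial_{\pi}\cap\ker\bar{\partial}=\operatorname{im}\partial_{\pi}\bar{\partial}$ by the lemma, so $x=\partial_{\pi}\bar{\partial}z=-\bar{\partial}(\partial_{\pi}z)$ with $\partial_{\pi}z\in\operatorname{im}\partial_{\pi}$, i.e.\ $x$ is a $\bar{\partial}$--boundary inside $\operatorname{im}\partial_{\pi}$. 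Since $\ker p=\operatorname{im}\partial_{\pi}$ and, via the isomorphism $\partial_{\pi}\colon L/C\xrightarrow{\,\sim\,}\operatorname{im}\partial_{\pi}$ (which intertwines the two copies of $\bar{\partial}$ up to sign), the cokernel of $\iota$ is likewise $\bar{\partial}$--acyclic, the long exact sequences in $\bar{\partial}$--cohomology attached to $0\to\operatorname{im}\partial_{\pi}\to C\to H\to 0$ and $0\to C\to L\to L/C\to 0$ force $p$ and $\iota$ to be quasi-isomorphisms. Then I would observe that for $\alpha\in\ker\partial_{\pi}$ one has $\bar{\partial}\alpha\in\operatorname{im}\bar{\partial}\cap\ker\partial_{\pi}=\operatorname{im}\partial_{\pi}\bar{\partial}\subset\operatorname{im}\partial_{\pi}$, so the differential induced by $\bar{\partial}$ on $H=C/\operatorname{im}\partial_{\pi}$ is zero.

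Putting this together, the zigzag $L\xleftarrow{\,\iota\,}C\xrightarrow{\,p\,}H$ consists of DGLA quasi-isomorphisms and $H$ carries the zero differential; taking cohomology along it yields isomorphisms of graded Lie algebras $H^{\bullet}(L)\cong H^{\bullet}(C)\cong H^{\bullet}(H)=H$. Thus $(H,0,[-,-]_{\partial_\pi})$ is isomorphic, as a DGLA, to the cohomology DGLA $(H^{\bullet}(L),0,[-,-])$, and the zigzag exhibits $L$ as connected by DGLA quasi-isomorphisms to its own cohomology equipped with the zero differential. That is precisely the formality of $(A_M^{\bullet,\bullet},\bar{\partial},[-,-]_{\partial_\pi})$.

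The only place the hypothesis enters is through the two halves of the $\partial_{\pi}\bar{\partial}$--lemma used above (acyclicity of $(\operatorname{im}\partial_{\pi},\bar{\partial})$ and vanishing of the induced differential on $H$); everything else is formal once one has recorded that $\bar{\partial}$ and $\partial_{\pi}$ anticommute and that the BV operator $\partial_{\pi}$ derives the bracket it generates. I expect the main point requiring care to be exactly this compatibility bookkeeping --- verifying that the bracket $[-,-]_{\partial_\pi}$, which is defined \emph{through} $\partial_{\pi}$ itself, descends to $\partial_{\pi}$--cohomology so that $H$ is genuinely a DGLA and $p$ a genuine DGLA morphism --- since this rests on the derivation property of $\partial_{\pi}$, which holds precisely because $\pi$ is Poisson.
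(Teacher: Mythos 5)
Your argument is correct and follows essentially the same route as the paper: the zigzag $(A_M^{\bullet,\bullet},\bar{\partial},[-,-]_{\partial_\pi})\hookleftarrow(\ker\partial_{\pi},\bar{\partial},[-,-]_{\partial_\pi})\twoheadrightarrow(H_{\partial_{\pi}}^{\bullet,\bullet}(M),0,[-,-]_{\partial_\pi})$, with the $\partial_{\pi}\bar{\partial}$--lemma guaranteeing both quasi--isomorphisms and the vanishing of the induced differential. The only cosmetic difference is that you establish the quasi--isomorphisms via $\bar{\partial}$--acyclicity of $\operatorname{im}\partial_{\pi}$ and long exact sequences, whereas the paper checks injectivity and surjectivity on cohomology directly, following Manin; the content and the use of the lemma are the same.
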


Actually,
the DGLA $(A_{M}^{\bullet,\bullet},\bar{\partial},[-,-]_{\partial_\pi})$
is closely relevant with the Kodaira--Spencer DGLA
$(A^{0,\bullet}(M,\wedge^\bullet T_{M}),\bar{\partial},[-,-]_{})$
which controls the deformations of complex structure of $M$;
see Proposition \ref{deformation}.
Therefore it is worth seeking the
Maurer--Cartan elements of the DGLA $(A_M^{\bullet,\bullet},\bar{\partial},[-,-]_{\partial_\pi})$.
\begin{theorem}\label{main-theorem3}
Let $(M,\pi)$ be a holomorphic Poisson manifold.
If $(M,\pi)$ satisfies the $\partial_{}\bar{\partial}$--lemma
or $\partial_{\pi}\bar{\partial}$--lemma,
then for any $[\alpha]\in H_{\bar{\partial}}^{1,1}(M)$,
there exists an Maurer--Cartan element
$\alpha_t$
whose $\alpha_1$ is a representative of $[\alpha]$.
In this case, $\pi^\sharp[\alpha]\in H^1(M,\mathcal{T}_{M})$
is tangent to a deformation of complex structure.
\end{theorem}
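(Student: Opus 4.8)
The plan is to construct $\alpha_t=\sum_{k\ge1}\alpha_k t^k$ recursively, with all $\alpha_k\in A_M^{1,1}$, by the classical obstruction method, using the hypothesis to annihilate the obstruction at each order. Since $[-,-]_{\partial_\pi}$ maps $A_M^{1,1}\otimes A_M^{1,1}$ into $A_M^{1,2}$ and $\bar{\partial}$ maps $A_M^{1,1}$ into $A_M^{1,2}$, the equation $\bar{\partial}\alpha_t+\tfrac12[\alpha_t,\alpha_t]_{\partial_\pi}=0$ splits, by powers of $t$, into $\bar{\partial}\alpha_1=0$ and, for $k\ge 2$,
\[
\bar{\partial}\alpha_k=\Phi_k:=-\tfrac12\sum_{i+j=k}[\alpha_i,\alpha_j]_{\partial_\pi},
\]
and it stays inside $A_M^{1,1}$. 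I first fix the first-order term: starting from an arbitrary representative $\alpha$ of $[\alpha]\in H^{1,1}_{\bar{\partial}}(M)$, the form $\partial_\pi\alpha$ (in the $\partial_\pi\bar{\partial}$--lemma case) or $\partial\alpha$ (in the $\partial\bar{\partial}$--lemma case) is $\bar{\partial}$--closed and obviously exact for the corresponding operator, so the lemma rewrites it as $\partial_\pi\bar{\partial}\lambda$, resp.\ $\partial\bar{\partial}\lambda$; replacing $\alpha$ by $\alpha-\bar{\partial}\lambda$ keeps the Dolbeault class and makes $\alpha_1$ simultaneously $\bar{\partial}$--closed and $\partial_\pi$--closed, resp.\ $\partial$--closed.

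The heart of the recursion is a Tian--Todorov--type identity. In the $\partial_\pi\bar{\partial}$--lemma case, since $\partial_\pi$ is a generator of $[-,-]_{\partial_\pi}$, the defining formula $(\ref{Lie str on forms1})$ gives
\[
[a,b]_{\partial_\pi}=\pm\,\partial_\pi(a\wedge b)\qquad\text{whenever }\partial_\pi a=\partial_\pi b=0 ,
\]
so the bracket of two $\partial_\pi$--closed forms is $\partial_\pi$--exact (and $\partial_\pi$--closed). In the $\partial\bar{\partial}$--lemma case I use $\partial_\pi=\iota_\pi\partial-\partial\iota_\pi$; inserting this into $(\ref{Lie str on forms1})$ and using $\partial a=\partial b=0$, together with the fact that $\iota_\pi$ is a second-order operator, one gets
\[
[a,b]_{\partial_\pi}=\partial\!\left((-1)^{|a|}\bigl(\iota_\pi a\wedge b+a\wedge\iota_\pi b-\iota_\pi(a\wedge b)\bigr)\right),
\]
so the bracket of two $\partial$--closed forms is $\partial$--exact (and $\partial$--closed).

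For the inductive step, suppose $\alpha_1,\dots,\alpha_{k-1}$ have been constructed and are all $\partial_\pi$--closed (resp.\ $\partial$--closed). The standard computation with the graded Jacobi identity and with the compatibility of $\bar{\partial}$ and $[-,-]_{\partial_\pi}$ shows $\bar{\partial}\Phi_k=0$, and the identity above shows $\Phi_k$ is $\partial_\pi$--exact, resp.\ $\partial$--exact; in particular it is $\partial_\pi$--closed, resp.\ $\partial$--closed. The $\partial_\pi\bar{\partial}$--lemma (resp.\ $\partial\bar{\partial}$--lemma) then gives $\Phi_k=-\bar{\partial}(\partial_\pi\nu_k)$ (resp.\ $\Phi_k=-\bar{\partial}(\partial\nu_k)$) for some $\nu_k$, so $\alpha_k:=-\partial_\pi\nu_k$ (resp.\ $\alpha_k:=-\partial\nu_k$) solves $\bar{\partial}\alpha_k=\Phi_k$ and is again $\partial_\pi$--closed (resp.\ $\partial$--closed) because $\partial_\pi^2=0$ (resp.\ $\partial^2=0$). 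This propagates the induction hypothesis, and assembling the $\alpha_k$ produces the desired Maurer--Cartan element with $\alpha_1$ representing $[\alpha]$. For the last assertion: by Proposition \ref{deformation} the contraction $\pi^\sharp$ carries Maurer--Cartan elements of $(A_M^{\bullet,\bullet},\bar{\partial},[-,-]_{\partial_\pi})$ to Maurer--Cartan elements of the Kodaira--Spencer DGLA $(A^{0,\bullet}(M,\wedge^\bullet T_{M}),\bar{\partial},[-,-])$, and it sends $A_M^{1,1}$ into $A^{0,1}(M,T_{M})$; hence $\phi_t:=\pi^\sharp\alpha_t=\sum_{k\ge1}(\pi^\sharp\alpha_k)t^k$ lies in $A^{0,1}(M,T_{M})[[t]]$ and is a (formal) deformation of the complex structure of $M$ whose Kodaira--Spencer class at $t=0$ equals $[\pi^\sharp\alpha_1]=\pi^\sharp[\alpha]\in H^1(M,\mathcal{T}_{M})$, i.e.\ $\pi^\sharp[\alpha]$ is tangent to this deformation.

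The delicate point, which drives the whole argument, is to ensure that every obstruction $\Phi_k$ is exact for precisely the operator the hypothesis controls ($\partial_\pi$, resp.\ $\partial$); this forces one to carry $\partial_\pi$--closedness (resp.\ $\partial$--closedness) of the $\alpha_k$ along the entire recursion and rests on the Tian--Todorov--type identity for $[-,-]_{\partial_\pi}$ --- in the $\partial_\pi\bar{\partial}$ branch this is immediate from the BV formula, but in the $\partial\bar{\partial}$ branch it requires a careful expansion of $(\ref{Lie str on forms1})$ via $\partial_\pi=\iota_\pi\partial-\partial\iota_\pi$ and bookkeeping of signs through the Maurer--Cartan recursion.
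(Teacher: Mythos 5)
Your proposal is correct and follows essentially the same route as the paper: normalize $\alpha_1$ to be closed for the auxiliary operator via the lemma, then kill each obstruction $\Phi_k$ by observing it is $\bar{\partial}$--closed (graded Jacobi identity) and exact for that operator (the Tian--Todorov--type identity, which is exactly how the paper argues via $[a,b]_{\partial_\pi}=\pm\partial_\pi(a\wedge b)$ and the derivation property), so the two--operator lemma produces $\alpha_{k+1}$ in the image of $\partial_\pi$ (resp.\ $\partial$), and finally transport the Maurer--Cartan element through the DGLA map $\pi^\sharp$ of Proposition \ref{deformation}. The only differences are cosmetic: you carry $\partial_\pi$--closedness (resp.\ $\partial$--closedness) where the paper carries $\partial_\pi$--exactness of the $\alpha_k$, $k\ge 2$, and you spell out the $\partial\bar{\partial}$ branch that the paper dispatches as ``the same strategy with $\partial$ in place of $\partial_\pi$.''
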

It is noteworthy that
the fact that on a holomorphic Poisson manifold $(X,\pi)$,
for any $[\alpha]\in H_{\bar{\partial}}^{1,1}(M)$,
$\pi^\sharp[\alpha]\in H^1(M,\mathcal{T}_{M})$
is tangent to a deformation of complex structure,
is proved by Hitchin \cite{Hit12}
under the assumption of $\partial_{}\bar{\partial}$--lemma or $H^2(M,\CO_M)=0$.
Later,
Hitchin's result is generalized by Fiorenza and Manetti (cf. \cite[Theorem 6.3]{FM12}) under the assumption that the natural map
$H^2_{dR}(M,\mathbb{C})\to H^2(M,\CO_M)$ is surjective.

This paper is organized as follows.
In Section [\ref{c2}] we first review some basics of holomorphic Poisson manifold
and discuss some results on $\partial_{}\bar{\partial}$--lemma and $\partial_{\pi}\bar{\partial}$--lemma.
We devote Section [\ref{c3}] to the formality of the Dolbeault complex.
In Section [\ref{c4}] we study the Maurer--Cartan equation of the DGLA
on the Dolbeault complex.
Finally, some examples are stated in Section [\ref{c6}].

\subsection*{Acknowledgments}
The author would like to thank the School of Mathematics of Sichuan University
and Tianyuan Mathematical Center in Southwest China for the hospitalities during the spring of 2022.
In particular, the author would like to thank the referee
for introducing the example of Nakamura manifold in the subsection [\ref{Nak}] to him.
This work is partially supported by the National Nature Science Foundation of China
(No. 12126309, 12126354, 12171351),
the Natural Science Foundation of Chongqing (No. CSTC2020JCYJ--MSXMX0160),
and the Scientific and Technological Research Program of Chongqing Municipal
Education Commission (Grant No. KJQN202201108).

\section{Preliminaries}\label{c2}

\subsection{Basics of holomorphic Poisson manifold}

In this subsection, we recall some basic facts on holomorphic Poisson manifolds.
Let $M$ be a complex manifold
and let $\CO_M$ be its structure sheaf (i.e., the sheaf of holomorphic functions),
$\Omega_{M}^{p}$ be the sheaf of holomorphic $p$--forms,
$\mathcal{T}_{M}$ be the sheaf of holomorphic vector fields.
\begin{definition}
A complex manifold $M$ is called a {\it holomorphic Poisson manifold} if
$M$ admits a section $\pi\in \Gamma(M, \wedge^{2}\mathcal{T}_{M})$
with $\bar{\partial}\pi=0, [\pi,\pi]_{SN}=0$
where $[-,-]_{SN}$ means the Schouten bracket.
\end{definition}
Such $\pi$ is called a {\it holomorphic Poisson bi--vector field} of $M$.
It induces a sheaf morphism $\pi^\sharp:\Omega_{M}^{1}\to\mathcal{T}_{M}$ via
$$\pi^\sharp(df)(dg)=\pi(df,dg).$$
The {\it Koszul--Brylinski operator} of $(M, \pi)$
on the sheaf $\Omega_{M}^{\bullet}$ of holomorphic forms
of $M$ is defined as
$$
\partial_{\pi}:= \iota_{\pi}\circ \partial-\partial\circ \iota_{\pi},
$$
where $\partial$ is the Dolbeault operator and $\iota_{\pi}$ is the contraction operator
with respect to $\pi$.
One can verify the following lemma by direct calculations.
\begin{lemma}\label{identities}
Let $(M, \pi)$ be a holomorphic Poisson manifold. Then we have the following identities:
\begin{itemize}
\item[(1)]$\bar{\partial}\iota_{\pi}-\iota_{\pi}\bar{\partial}=0,$
$\bar{\partial}\partial_{\pi}+\partial_{\pi}\bar{\partial}=0;$
\item[(2)]$\partial\partial_{\pi}+\partial_{\pi}\partial=0;$
\item[(3)]$\partial_{\pi}\iota_{\pi}-\iota_{\pi}\partial_{\pi}=0,$
$\partial_{\pi}^2=0$.
\end{itemize}
\end{lemma}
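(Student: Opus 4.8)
The plan is to treat the four operators $\partial$, $\bar{\partial}$, $\iota_{\pi}$ and $\partial_{\pi}$ as homogeneous elements of the graded Lie algebra $\mathrm{End}^{\bullet}(A_M^{\bullet,\bullet})$ of $\mathbb{C}$-linear operators on the (smooth) Dolbeault complex, equipped with the graded commutator $[A,B]:=AB-(-1)^{|A|\,|B|}BA$. Here $\partial,\bar{\partial},\partial_{\pi}$ are odd while $\iota_{\pi}$ is even, since contraction by the bivector lowers the form degree by $2$. The starting observation is simply that $\partial_{\pi}=\iota_{\pi}\partial-\partial\iota_{\pi}=[\iota_{\pi},\partial]$. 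Once this is in place, each identity in the statement is a graded commutator that I would evaluate by the graded Jacobi identity, reducing everything to three primitive inputs.

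The three inputs I would isolate are: (i) $\partial^{2}=\bar{\partial}^{2}=0$ and $\partial\bar{\partial}+\bar{\partial}\partial=0$, obtained by decomposing $d^{2}=0$ into bidegrees; (ii) a Cartan-type contraction formula $[\bar{\partial},\iota_{\pi}]=\pm\iota_{\bar{\partial}\pi}$, which I would verify in local holomorphic coordinates (writing $\pi=\tfrac12\pi^{ij}\partial_{i}\wedge\partial_{j}$, the contraction only touches the $dz$-factors while $\bar{\partial}$ differentiates the coefficients, so the commutator collects exactly the term $\iota_{\bar{\partial}\pi}$), which vanishes by $\bar{\partial}\pi=0$ and is precisely the first identity in (1); and (iii) the derived-bracket description of the Schouten--Nijenhuis bracket, $[[\iota_{\pi},\partial],\iota_{\pi}]=\pm\iota_{[\pi,\pi]_{SN}}$, so that the Poisson condition $[\pi,\pi]_{SN}=0$ forces $[\partial_{\pi},\iota_{\pi}]=0$, the first identity in (3).

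With these at hand the remaining identities are formal. Identity (2), $\partial\partial_{\pi}+\partial_{\pi}\partial=0$, follows directly by expanding $\partial_{\pi}=\iota_{\pi}\partial-\partial\iota_{\pi}$, whereupon the four terms collapse to $\partial\iota_{\pi}\partial-\partial\iota_{\pi}\partial=0$ after using $\partial^{2}=0$. For the second identity in (1) I would write $\{\bar{\partial},\partial_{\pi}\}=[\bar{\partial},[\iota_{\pi},\partial]]$ and apply graded Jacobi, expressing it through $[\bar{\partial},\iota_{\pi}]=0$ (input (ii)) and $[\bar{\partial},\partial]=0$ (input (i)), both zero. Finally $\partial_{\pi}^{2}=\tfrac12[\partial_{\pi},\partial_{\pi}]=\tfrac12[[\iota_{\pi},\partial],\partial_{\pi}]$, and Jacobi reduces this to $[\iota_{\pi},\partial_{\pi}]=0$ (the first identity in (3)) and $[\partial,\partial_{\pi}]=0$ (identity (2)), yielding $\partial_{\pi}^{2}=0$.

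The one genuinely non-formal step is input (iii): checking that the Schouten--Nijenhuis bracket of multivector fields is recovered as the derived bracket $[[\iota_{\pi},\partial],\iota_{\pi}]$. I would establish it either by invoking the classical fact that $\partial_{\pi}$ is the Batalin--Vilkovisky operator generating $[-,-]_{SN}$, or, for a self-contained argument, by a direct evaluation of $\partial_{\pi}^{2}$ in local coordinates, in which the Poisson condition $[\pi,\pi]_{SN}=0$ appears exactly as the obstruction to $\partial_{\pi}^{2}=0$. This local computation is the most laborious part; everything else is bookkeeping with parities and the graded Jacobi identity.
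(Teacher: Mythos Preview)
Your proposal is correct and complete. The paper itself does not supply a proof of this lemma; it only remarks that one ``can verify the following lemma by direct calculations.'' Your approach via the graded commutator on $\mathrm{End}^{\bullet}(A_M^{\bullet,\bullet})$, reducing everything to the three primitive inputs $d^2=0$, $[\bar{\partial},\iota_{\pi}]=\iota_{\bar{\partial}\pi}=0$, and the derived-bracket identity $[\partial_{\pi},\iota_{\pi}]=\pm\iota_{[\pi,\pi]_{SN}}=0$, is a clean and standard way to organize exactly those direct calculations; in particular, the use of graded Jacobi to deduce $\{\bar{\partial},\partial_{\pi}\}=0$ and $\partial_{\pi}^{2}=0$ from the primitives is efficient and avoids repeated coordinate work. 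The one place to be careful is the sign and normalization in the derived-bracket formula for $[\pi,\pi]_{SN}$, but since the right-hand side vanishes by hypothesis this does not affect the argument.
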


Moreover, one can check that
the Koszul--Brylinski operator $\partial_{\pi}$ is a BV operator, i.e.,
for any  $\alpha\in \Omega_{M}^{k}$ and $\beta\in \Omega_{M}^{l}$,
\begin{equation}\label{Lie str on forms1}
[\alpha, \beta]_{\partial_{\pi}}
=(-1)^{k}\Big(\partial_{\pi}(\alpha\wedge \beta)-(\partial_{\pi}\alpha) \wedge \beta
               -(-1)^{k} \alpha\wedge (\partial_{\pi}\beta)\Big)
\end{equation}
is a Gerstenhaber bracket (a Poisson bracket with degree $-1$) on $\Omega_{M}^{\bullet}$.
Equivalently, the bracket $[-, -]_{\partial_{\pi}}$ is obtained by Leibniz rule via
\begin{equation}\label{Lie str on forms2}
[\alpha, \beta]_{\partial_{\pi}}
:=L_{\pi^{\sharp}(\alpha)}\beta-L_{\pi^{\sharp}(\beta)}\alpha-\partial(\pi(\alpha,\beta)),
\; \forall \; \alpha,\beta\in \Omega_{M}^{1}.
\end{equation}
Thus, there is a  holomorphic Koszul--Brylinski complex
$$
0\to \Omega_{M}^{n} \stackrel{\partial_{\pi}}{\to}
  \cdots \stackrel{\partial_{\pi}}{\to}\Omega_{M}^{s+1} \stackrel{\partial_{\pi}}{\to}
      \Omega_{M}^{s} \stackrel{\partial_{\pi}}{\to} \Omega_{M}^{s-1} \stackrel{\partial_{\pi}}{\to} \cdots\stackrel{\partial_{\pi}}{\to}\mathcal{O}_M\to 0.
$$

\begin{definition}
Let $(M, \pi)$ be a holomorphic Poisson manifold.
The {\it $k$--th Koszul--Brylinski homology} of $(M,\pi)$ is defined as
the $k$--th hypercohomology of its holomorphic Koszul--Brylinski complex, that is to say,
\begin{equation*}
H_{k}(M, \pi):=\mathbb{H}^{k}(M, (\Omega_{M}^{\bullet},\partial_{\pi})).
\end{equation*}
\end{definition}

\begin{lemma}\label{KB-equal1}
Suppose $(M, \pi)$ is a holomorphic Poisson manifold.
Then its holomorphic Koszul--Brylinski complex admits a resolution which is
the total complex of the double complex
$(\A_{M}^{\bullet,\bullet}, \partial_{\pi},\bar{\partial})$,
where $\A_{M}^{p,q}$ is the sheaf of $(p,q)$--forms on $M$.
\end{lemma}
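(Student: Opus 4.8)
The statement to prove, Lemma \ref{KB-equal1}, asserts that the holomorphic Koszul--Brylinski complex $(\Omega_M^\bullet, \partial_\pi)$ admits a resolution by the total complex of $(\A_M^{\bullet,\bullet}, \partial_\pi, \bar\partial)$. Here's my proof plan:

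\medskip

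The plan is to verify that $(\A_M^{\bullet,\bullet}, \partial_\pi, \bar\partial)$ is genuinely a double complex, and then to exhibit the total complex as a fine resolution of $(\Omega_M^\bullet, \partial_\pi)$ in the appropriate (cochain-of-sheaves) sense, so that its hypercohomology computes the $\partial_\pi$-hypercohomology. First I would recall that $\A_M^{p,q}$ denotes the sheaf of smooth $(p,q)$-forms, and that $\partial_\pi$ extends from holomorphic forms to all smooth $(p,q)$-forms by the same formula $\iota_\pi\partial - \partial\iota_\pi$ (using that $\pi$ is a smooth section of $\wedge^2 T_M^{1,0}$ and $\partial$ here means the $(1,0)$-part of the de Rham differential on smooth forms). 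Then I would check that this double complex structure is well-defined: that $\partial_\pi^2 = 0$, $\bar\partial^2 = 0$, and $\partial_\pi\bar\partial + \bar\partial\partial_\pi = 0$ on smooth forms — the first and third of these are exactly the smooth-form analogues of Lemma \ref{identities}(3) and (1), and the computations are the same direct ones (anticommutation of $\partial_\pi$ with $\bar\partial$ follows from $\bar\partial\pi = 0$, i.e. $\pi$ holomorphic). This gives the double complex, graded so that $\A_M^{p,q}$ sits in appropriate total degree; I would index it so that $\partial_\pi: \A_M^{p,q}\to\A_M^{p-1,q}$ lowers the holomorphic degree, matching the homological grading of the Koszul--Brylinski complex.

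\medskip

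The core of the argument is then a standard resolution/hypercohomology comparison: for each fixed $p$, the column $\bigl(\A_M^{p,\bullet}, \bar\partial\bigr)$ is, by the Dolbeault--Grothendieck lemma, a resolution of the sheaf $\Omega_M^p$ of holomorphic $p$-forms, i.e. the complex of sheaves $0 \to \Omega_M^p \to \A_M^{p,0}\xrightarrow{\bar\partial}\A_M^{p,1}\xrightarrow{\bar\partial}\cdots$ is exact. Moreover each $\A_M^{p,q}$ is a fine sheaf (it is a module over the soft sheaf of smooth functions, hence admits partitions of unity), so it is acyclic for the global sections functor. The map $\partial_\pi: \Omega_M^p\to\Omega_M^{p-1}$ on holomorphic forms is compatible, column by column, with the map $\partial_\pi$ on smooth forms via the inclusions $\Omega_M^\bullet\hookrightarrow\A_M^{\bullet,0}$. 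Hence the total complex $\bigl(\mathrm{Tot}^\bullet(\A_M^{\bullet,\bullet}), \partial_\pi \pm \bar\partial\bigr)$ is a complex of fine sheaves which is quasi-isomorphic, as a complex of sheaves, to $(\Omega_M^\bullet, \partial_\pi)$ — the quasi-isomorphism is the edge inclusion $\Omega_M^\bullet = \Omega_M^\bullet \otimes \ker\bar\partial|_{q=0}$, and exactness of the augmented columns plus a spectral-sequence (or mapping-cone / acyclic-assembly) argument promotes the column-wise quasi-isomorphisms to a quasi-isomorphism of total complexes. Taking global sections, since the terms are $\Gamma$-acyclic, the hypercohomology $\mathbb{H}^k(M, (\Omega_M^\bullet, \partial_\pi))$ is computed by the total complex of global sections of $(\A_M^{\bullet,\bullet}, \partial_\pi, \bar\partial)$, which is exactly the assertion that this double complex resolves the holomorphic Koszul--Brylinski complex.

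\medskip

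I expect the main obstacle to be purely bookkeeping rather than conceptual: getting the sign conventions and the grading shift right so that $\partial_\pi$ on smooth forms really does anticommute with $\bar\partial$ and really does restrict to the Koszul--Brylinski operator on holomorphic forms, and checking carefully that $\partial_\pi$ — defined a priori via the holomorphic $\partial$ — makes sense and satisfies the needed identities when $\partial$ is reinterpreted as the $(1,0)$-component of the de Rham differential on smooth forms. Once the double complex is in place, the resolution statement is the standard hypercohomology-of-a-double-complex argument (the same one that identifies Dolbeault cohomology with sheaf cohomology of $\Omega_M^p$), applied one homological degree at a time and then assembled. I would therefore present the sign/identity verification in a brief lemma or remark (citing Lemma \ref{identities} for the holomorphic case and noting the proof is identical), and then invoke the Dolbeault--Grothendieck lemma together with fineness of $\A_M^{p,q}$ to conclude.
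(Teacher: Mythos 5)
Your proposal is correct and follows essentially the same route as the paper: the paper's proof simply observes that each column $\A_M^{p,\bullet}$ is a fine resolution of $\Omega_M^p$ and that $\partial_\pi$ (anti)commutes with $\bar\partial$ (Lemma \ref{identities}), citing Sti\'enon for details, which is exactly the Dolbeault--Grothendieck-plus-fineness hypercohomology argument you spell out. Your additional care with the extension of $\partial_\pi$ to smooth forms and the grading/sign bookkeeping is a fuller write-up of the same idea, not a different method.
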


\begin{proof}
The lemma is followed by the fact that
$\A_{M}^{p,\bullet}$ is a fine resolution of $\Omega_{M}^{p}$ and $\partial_{\pi}$ commutes with $\bar{\partial}$; also see \cite[Theorem 5.1]{Sti11}.
\end{proof}

\begin{remark}For a complex $n$--dimensional manifold $M$, one has
$$
H_{k}(M, \pi=0)
\cong
\bigoplus\limits_{q-p+n=k}H^{p,q}_{\bar{\partial}}(M).
$$
\end{remark}

Actually the Koszul--Brylinski homology and the Dolbeault cohomology are closely related
with spectral sequences.
\begin{definition}\label{Dol-Poi-spectral-seq}
Let $(M, \pi)$ be a compact holomorphic Poisson manifold of complex dimension $n$.
The following spectral sequence associated to the double complex $(\Gamma(M, \mathcal{A}_{M}^{\bullet,\bullet}), \partial_{\pi},\bar{\partial})$,
\begin{equation}\label{Dol-Poisson-spec.-seq}
E_{1}^{s,t}:=H_{\bar{\partial}}^{n-s,t}(M)
\Longrightarrow H_{n-s+t}(M,\pi),
\end{equation}
is called the {\it Dolbeault--Koszul--Brylinski spectral sequence of $(M, \pi)$}.
\end{definition}

In general, the Dolbeault--Koszul--Brylinski spectral sequence \eqref{Dol-Poisson-spec.-seq}
does not degenerate at $E_{1}$--page (see examples in Section \ref{c6}).
An equivalent condition of the $E_{1}$--degeneracy of Dolbeault--Koszul--Brylinski spectral sequence
is given as follows.

\begin{lemma}[{\cite[Lemma 5.7]{CCYY22}}]\label{deg}
Let $(M, \pi)$ be a compact holomorphic Poisson manifold of complex dimension $n$.
Then the {\it Dolbeault--Koszul--Brylinski spectral sequence} of $(M, \pi)$
degenerates at $E_{1}$--page if and only if  for any $0\leq k\leq 2n$,
$$
\sum_{p-q=n-k} \dim_{\mathbb{C}}\, H_{\bar{\partial}}^{p,q}(M)=\dim_{\mathbb{C}} H_{k}(M,\pi).
$$

\end{lemma}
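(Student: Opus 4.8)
The plan is to deduce the lemma from the general, purely linear-algebraic principle that a bounded spectral sequence with finite-dimensional terms collapses at a prescribed page precisely when, in each total degree, the total dimension of that page already equals the dimension of the limit. To set the stage, recall from Lemma \ref{KB-equal1} that the Dolbeault--Koszul--Brylinski spectral sequence is the spectral sequence of the double complex $(\Gamma(M,\A_M^{\bullet,\bullet}),\partial_\pi,\bar\partial)$. Since this double complex lives in the finite square $0\le p,q\le n$, the spectral sequence is bounded: it converges to $H_\bullet(M,\pi)$ and, in each bidegree, stabilizes, $E_r^{s,t}=E_\infty^{s,t}$ for $r\gg 0$. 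Compactness of $M$ guarantees that every $E_1^{s,t}=H^{n-s,t}_{\bar\partial}(M)$ is finite-dimensional, and hence so is every $E_r^{s,t}$. Finally I would record the bookkeeping fact --- visible already in the $\pi=0$ case of the Remark following Lemma \ref{KB-equal1} and read off from Definition \ref{Dol-Poi-spectral-seq} --- that the terms of the $E_1$-page sitting in total degree $k$ are exactly the Dolbeault groups $H^{p,q}_{\bar\partial}(M)$ with $p-q=n-k$.

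The core of the argument is the comparison of dimensions between consecutive pages. For each $r\ge 1$ the term $E_{r+1}^{s,t}$ is the $d_r$-cohomology of $E_r^{\bullet,\bullet}$ at the spot $(s,t)$, hence a subquotient of $E_r^{s,t}$; thus $\dim E_{r+1}^{s,t}\le\dim E_r^{s,t}$, with equality at $(s,t)$ if and only if both $d_r$-differentials incident to $(s,t)$ vanish. In particular $\dim E_\infty^{s,t}\le\dim E_1^{s,t}$ in every bidegree. On the limit side, convergence of the bounded spectral sequence identifies the associated graded of $H_k(M,\pi)$, for the induced filtration, with $\bigoplus E_\infty^{s,t}$ taken over the bidegrees $(s,t)$ of total degree $k$, so that $\dim_{\mathbb{C}}H_k(M,\pi)$ equals the sum of these $\dim_{\mathbb{C}}E_\infty^{s,t}$. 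Putting the two observations together with the bookkeeping fact, for every $0\le k\le 2n$ I obtain
\begin{equation*}
\dim_{\mathbb{C}}H_k(M,\pi)\;=\;\sum\dim_{\mathbb{C}}E_\infty^{s,t}\;\le\;\sum\dim_{\mathbb{C}}E_1^{s,t}\;=\;\sum_{p-q=n-k}\dim_{\mathbb{C}}H^{p,q}_{\bar\partial}(M),
\end{equation*}
where both unlabelled sums run over the bidegrees $(s,t)$ lying in total degree $k$.

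It remains to see that equality in the displayed inequality for all $k$ is equivalent to $E_1$-degeneracy. If equality holds for every $k$, then --- since every summand of the left sum is dominated by the corresponding summand of the right --- one gets $\dim E_\infty^{s,t}=\dim E_1^{s,t}$ in every bidegree; the monotone chain $\dim E_1^{s,t}\ge\dim E_2^{s,t}\ge\cdots\ge\dim E_\infty^{s,t}$ then collapses, so $\dim E_{r+1}^{s,t}=\dim E_r^{s,t}$ for all $r\ge1$ and all $(s,t)$, which by the equality clause above forces every $d_r$ to vanish, i.e.\ the spectral sequence degenerates at $E_1$. Conversely, $E_1$-degeneracy gives $E_1=E_\infty$, whence the dimension identity. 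I do not expect a genuine obstacle in this argument: the only points that need care are invoking compactness of $M$ so that all the dimensions in play are finite (otherwise the monotonicity argument is vacuous), and correctly matching --- under the indexing of Definition \ref{Dol-Poi-spectral-seq} --- the total-degree-$k$ antidiagonal of the $E_1$-page with the family $\{H^{p,q}_{\bar\partial}(M):p-q=n-k\}$.
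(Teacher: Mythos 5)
Your argument is correct: it is the standard bounded-spectral-sequence dimension count (monotonicity $\dim E_{r+1}^{s,t}\le\dim E_r^{s,t}$ with equality forcing the incident differentials to vanish, plus $\dim H_k=\sum\dim E_\infty^{s,t}$ over the total-degree-$k$ antidiagonal, all finite by compactness). Note that the paper does not prove this lemma at all but quotes it from \cite[Lemma 5.7]{CCYY22}; your proof is exactly the expected argument behind that citation, the only point deserving care being the one you already flag, namely matching the indexing of Definition \ref{Dol-Poi-spectral-seq} so that the degree-$k$ antidiagonal of the $E_1$--page is $\{H^{p,q}_{\bar\partial}(M):p-q=n-k\}$.
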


By a result of Sti\'{e}non \cite[Theorem 6.4]{Sti11},
the holomorphic Evens--Lu--Weinstein pairing on the holomorphic Koszul--Brylinski
homology is non--degenerate.
More precisely,
if $(M, \pi)$ is a compact holomorphic Poisson manifold of complex dimension $n$,
then for $0\leq k\leq 2n$, there is an isomorphism
\begin{equation}\label{Serre-Poincare-duality}
H_{2n-k}(M,\pi)\cong H_{k}(M,\pi).
\end{equation}

Meanwhile, in the dual aspect, there exists a {\it holomorphic Lichnerowicz--Poisson complex}
$(\wedge^{\bullet}\mathcal{T}_{M},b_{\pi})$:
\begin{equation*}
0\to \CO_M\stackrel{b_{\pi}}{\to}
     \cdots \stackrel{b_{\pi}}{\to}\wedge^{s-1}\mathcal{T}_{M} \stackrel{b_{\pi}}{\to}
           \wedge^{s}\mathcal{T}_{M} \stackrel{b_{\pi}}{\to} \wedge^{s+1}\mathcal{T}_{M} \stackrel{b_{\pi}}{\to}
                    \cdots\stackrel{b_{\pi}}{\to}\wedge^{n}\mathcal{T}_{M} \to 0
\end{equation*}
where $b_{\pi}(-)=[\pi,-]_{SN}$.
The $k$--th hypercohomology of $(\wedge^{\bullet}\mathcal{T}_{M},b_{\pi})$ is called the
$k$--th \emph{holomorphic Lichnerowicz--Poisson cohomology}, i.e.,
$$
H^{k}(M, \pi):=\mathbb{H}^{k}(M, (\wedge^{\bullet}\mathcal{T}_{M},b_{\pi})).
$$
If $M$ admits a holomorphic volume form $\omega\in \Gamma(M,\Omega^n_M)$,
that is to say, $M$ is a Calabi--Yau manifold,
then there is a natural morphism of sheaves
$$
\iota_{(-)}\omega:
\wedge^{s}\mathcal{T}_{M}
\to
\Omega_{M}^{n-s}.
$$
However, it does not induce a morphism of sheaf complexes
between
$(\wedge^{\bullet}\mathcal{T}_{M},b_{\pi})$ and $(\Omega_{M}^{\bullet},\partial_{\pi})$
since generally $\iota_{(-)}\omega$ does not commutative with the differentials.
This motivates the following definition.

\begin{definition}[{cf. \cite{We97,BZ99}}]
A holomorphic Poisson manifold $(X,\pi)$ is called {\it unimodular}
if there is a holomorphic volume form $\omega$
such that the morphism  $\iota_{(-)}\omega$
induces a morphism of sheaf complexes from
$(\wedge^{\bullet}\mathcal{T}_{M},b_{\pi})$ to $(\Omega^{\bullet}_{M},\partial_{\pi})$.
\end{definition}
An equivalent condition of
a holomorphic Poisson manifold $(M,\pi)$ being unimodular is $\partial_\pi\omega=0$,
or the modular vector field, introduced by Weinstein \cite{We97} and
Brylinski--Zuckerman \cite{BZ99}, vanishes.
In particular, we have

\begin{proposition}[{\cite[Proposition 4.7]{Sti11}}]\label{dual}
If the $n$--dimensional holomorphic Poisson manifold $(M,\pi)$ is unimodular,
then for any $k\in \mathbb{Z}$, there is an isomorphism
$$
H_{k}(M, \pi)\cong H^{2n-k}(M, \pi).
$$
\end{proposition}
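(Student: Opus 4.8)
The plan is to deduce the duality directly from the definition of unimodularity, together with the Evens--Lu--Weinstein--Sti\'{e}non self--duality \eqref{Serre-Poincare-duality}. Unimodularity says precisely that contraction with a holomorphic volume form is a morphism of complexes of sheaves from the holomorphic Lichnerowicz--Poisson complex to the holomorphic Koszul--Brylinski complex; I would upgrade this to an \emph{isomorphism} of complexes of sheaves, pass to hypercohomology, and then absorb the resulting regrading through \eqref{Serre-Poincare-duality}.

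First, since $(M,\pi)$ is unimodular, fix a holomorphic volume form $\omega\in\Gamma(M,\Omega_M^n)$ with $\partial_\pi\omega=0$. By the definition of unimodularity the contractions
$$
\iota_{(-)}\omega\colon\ \wedge^s\mathcal{T}_M\longrightarrow\Omega_M^{n-s},\qquad X\longmapsto\iota_X\omega\qquad(0\le s\le n),
$$
commute with $b_\pi$ and $\partial_\pi$, hence assemble into a morphism of complexes of sheaves $(\wedge^\bullet\mathcal{T}_M,b_\pi)\to(\Omega_M^\bullet,\partial_\pi)$. The next step is to observe that each of these maps is an isomorphism of sheaves: $\wedge^sT_M$ and $\Omega_M^{n-s}$ have equal rank $\binom{n}{s}$, and in local holomorphic coordinates $z_1,\dots,z_n$ with $\omega$ a nowhere--vanishing multiple of $dz_1\wedge\cdots\wedge dz_n$, contraction with $\omega$ sends the frame element $\partial_{z_{i_1}}\wedge\cdots\wedge\partial_{z_{i_s}}$ of $\wedge^sT_M$ to a nonzero multiple of $dz_{j_1}\wedge\cdots\wedge dz_{j_{n-s}}$, where $\{j_1<\cdots<j_{n-s}\}$ is complementary to $\{i_1<\cdots<i_s\}$; this is plainly a fibrewise bijection. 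Hence $\iota_{(-)}\omega$ is an isomorphism of complexes of sheaves.

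An isomorphism of complexes of sheaves induces isomorphisms on hypercohomology in every degree. Since $\wedge^s\mathcal{T}_M$, which sits in degree $s$ of the first complex, is carried onto $\Omega_M^{n-s}$, which sits in degree $s$ of the second, the identification is degree--preserving, so $H^m(M,\pi)\cong H_m(M,\pi)$ for all $m$. Combining with \eqref{Serre-Poincare-duality}, namely $H_{2n-k}(M,\pi)\cong H_k(M,\pi)$, we obtain
$$
H^{2n-k}(M,\pi)\ \cong\ H_{2n-k}(M,\pi)\ \cong\ H_k(M,\pi)
$$
for every $k$, which is the assertion.

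I do not foresee a genuine obstacle. The one essential input --- that $\iota_{(-)}\omega$ intertwines $b_\pi$ and $\partial_\pi$ --- is part of the very notion of unimodularity (equivalently, it is the identity expressing that the modular vector field of $\omega$ vanishes), so there is nothing to verify there; fibrewise invertibility of contraction with a nowhere--vanishing top form, and functoriality of hypercohomology, are routine. The only point requiring care is the matching of the grading conventions of the two complexes, which is precisely where \eqref{Serre-Poincare-duality} enters: contraction with $\omega$ by itself only identifies $H^m(M,\pi)$ with $H_m(M,\pi)$, and \eqref{Serre-Poincare-duality} is what converts this into the reflected statement $H_k(M,\pi)\cong H^{2n-k}(M,\pi)$.
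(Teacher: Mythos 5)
The paper offers no proof of Proposition \ref{dual} at all: it is quoted verbatim from Sti\'enon's paper, so there is nothing internal to compare against. Your argument is essentially the standard (and Sti\'enon's) one --- unimodularity makes $\iota_{(-)}\omega$ a morphism of sheaf complexes, the nowhere--vanishing of $\omega$ makes each $\iota_{(-)}\omega\colon\wedge^{s}\mathcal{T}_M\to\Omega_M^{n-s}$ an isomorphism of sheaves, and hypercohomology converts the resulting isomorphism of complexes into an isomorphism of $H^{\bullet}(M,\pi)$ with the Koszul--Brylinski homology --- and all of those steps are correct.

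The one point worth flagging is the final regrading, which you yourself identify as the delicate spot. With the grading conventions actually used in this paper (fixed by the remark computing $H_k(M,0)\cong\bigoplus_{q-p+n=k}H^{p,q}_{\bar\partial}(M)$, which places $\Omega_M^p$ in total degree $n-p$), the contraction is indeed degree--preserving and gives only $H^m(M,\pi)\cong H_m(M,\pi)$; to reach the reflected form $H_k\cong H^{2n-k}$ you must then invoke \eqref{Serre-Poincare-duality}, which the paper states only for \emph{compact} $M$ (it is the non--degeneracy of the Evens--Lu--Weinstein pairing). Hence your proof establishes the proposition in the compact setting --- which is where the paper uses it --- but not as literally stated, since Proposition \ref{dual} carries no compactness hypothesis, and the extra step is genuinely needed under these conventions: for $M=\mathbb{C}^n$, $\pi=0$ (which is unimodular) one has $H_0(M,0)\cong H^{n,0}_{\bar\partial}(\mathbb{C}^n)\neq 0$ while $H^{2n}(M,0)=0$, so only your intermediate identification $H_m\cong H_m$-versus-$H^m$ survives without compactness. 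In Sti\'enon's own grading of $H_k$ (which differs from this paper's by $k\mapsto 2n-k$) the contraction alone already yields $H_k\cong H^{2n-k}$ with no compactness and no appeal to \eqref{Serre-Poincare-duality}. So either add the (contextually harmless) compactness hypothesis or note the convention discrepancy; the mathematical content of your argument is otherwise sound.
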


\subsection{$\partial_{}\bar{\partial}$--lemma and $\partial_{\pi}\bar{\partial}$--lemma}

In this subsection,
we consider the $E_{1}$--degeneracy of Dolbeault--Koszul--Brylinski spectral sequence
of a holomorphic Poisson manifold $(M, \pi)$
under the assumption of $\partial_{}\bar{\partial}$--lemma or
$\partial_{\pi}\bar{\partial}$--lemma.
Let $A_{M}^{s,t}:=\Gamma(M, \mathcal{A}_{M}^{s,t})$ be the space of differential $(s,t)$--forms on $M$.
\begin{theorem}\label{deg1}
Let $(M,\pi)$ be a holomorphic Poisson manifold.
If $M$ satisfies the $\partial_{}\bar{\partial}$--lemma,
then its Dolbeault--Koszul--Brylinski spectral sequence degenerates at $E_{1}$--page,
i.e.
$$
H_{k}(M, \pi)
\cong
\bigoplus_{p-q=n-k}H^{p,q}_{\bar{\partial}}(M).
$$
\end{theorem}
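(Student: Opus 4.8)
The plan is to prove directly that the Dolbeault--Koszul--Brylinski spectral sequence degenerates at $E_1$, i.e. that every differential $d_r$ ($r\ge 1$) of the spectral sequence of the double complex $(A_M^{\bullet,\bullet},\partial_{\pi},\bar{\partial})$, filtered by the holomorphic degree so that $E_1^{\bullet,\bullet}=H_{\bar{\partial}}^{\bullet,\bullet}(M)$, vanishes; this is exactly what Lemma \ref{deg} asks for. The only input needed from the $\partial_{}\bar{\partial}$--lemma is its ``Bott--Chern'' package: (a) every $\bar{\partial}$--cohomology class has a representative $x$ with $\partial x=\bar{\partial}x=0$; and (b) a form which is $\bar{\partial}$--closed and $\partial$--exact can be written as $\partial\bar{\partial}c$, hence also as $\bar{\partial}(-\partial c)$ with the primitive $-\partial c$ still $\partial$--closed.

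The mechanism is the identity $\partial_{\pi}=\iota_{\pi}\partial-\partial\iota_{\pi}$, together with the fact that $\iota_{\pi}$ commutes with $\bar{\partial}$ (Lemma \ref{identities}(1)): if $\partial x=0$ then $\partial_{\pi}x=-\partial(\iota_{\pi}x)$ is $\partial$--exact. Take a representative $x$ as in (a). Then $\partial_{\pi}x=-\partial(\iota_{\pi}x)$ is $\partial$--exact and is $\bar{\partial}$--closed since $\bar{\partial}\partial_{\pi}x=-\partial_{\pi}\bar{\partial}x=0$; by (b), $\partial_{\pi}x=\bar{\partial}y_1$ with $\partial y_1=0$, so in particular $d_1[x]=[\partial_{\pi}x]_{\bar{\partial}}=0$. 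Since $y_1$ is again $\partial$--closed, $\partial_{\pi}y_1=-\partial(\iota_{\pi}y_1)$ is once more $\partial$--exact, and it is $\bar{\partial}$--closed because $\bar{\partial}\partial_{\pi}y_1=-\partial_{\pi}\bar{\partial}y_1=-\partial_{\pi}\partial_{\pi}x=0$ (using $\partial_{\pi}^2=0$, Lemma \ref{identities}(3)); hence $\partial_{\pi}y_1=\bar{\partial}y_2$ with $\partial y_2=0$. Iterating, each Bott--Chern representative $x\in A_M^{p,q}$ gives rise to a zigzag $x,y_1,y_2,\dots$ in $A_M^{\bullet,\bullet}$ with $y_j\in A_M^{p-j,q-j}$, $\partial y_j=0$, $\bar{\partial}y_1=\partial_{\pi}x$ and $\bar{\partial}y_{j+1}=\partial_{\pi}y_j$, the sequence terminating once the form degrees fall out of range.

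Now the higher differential $d_r$ of the spectral sequence, evaluated on $[x]$, is computed by precisely such a zigzag: $d_r[x]=[\partial_{\pi}y_{r-1}]_{\bar{\partial}}$ for an admissible choice of primitives $y_1,\dots,y_{r-1}$. Our zigzag is admissible, and it satisfies $\partial_{\pi}y_{r-1}=\bar{\partial}y_r$, so $d_r[x]=0$; running this inductively over $r$ (the base case $d_1=0$ being the previous paragraph, and each $d_r$ being well defined once $d_1=\dots=d_{r-1}=0$) yields $d_r=0$ for all $r$. Hence $E_1=E_\infty$, which is exactly the asserted degeneration, and therefore $H_k(M,\pi)\cong\bigoplus_{p-q=n-k}H^{p,q}_{\bar{\partial}}(M)$.

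The delicate point — and the one I expect to be the real work — is the bookkeeping for the higher differentials: verifying that the primitives $y_j$ produced above (carrying the extra property $\partial y_j=0$) constitute a legitimate choice in the standard zigzag description of $d_r$, with the correct signs in the total differential $\bar{\partial}\pm\partial_{\pi}$ and the correct bidegrees $y_j\in A_M^{p-j,q-j}$. A less economical alternative sidesteps $d_{\ge 2}$ altogether: the spectral sequence gives $\dim H_k(M,\pi)\le\sum_{p-q=n-k}\dim H^{p,q}_{\bar{\partial}}(M)$, while deforming the total differential to $\bar{\partial}+t\partial+\partial_{\pi}$ — which squares to zero for every $t$ by Lemma \ref{identities}, and, for $t\ne 0$, is conjugated to the de Rham differential by $e^{\iota_{\pi}/t}$ followed by a rescaling of the bidegree — gives, via upper semicontinuity of the dimension of cohomology, $\sum_k\dim H_k(M,\pi)\ge\dim H^\bullet_{dR}(M)=\sum_{p,q}\dim H^{p,q}_{\bar{\partial}}(M)$ under the $\partial_{}\bar{\partial}$--lemma; summing the first inequality over $k$ then forces equality in every degree, and Lemma \ref{deg} concludes. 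In that route the subtlety is justifying the semicontinuity, since $(A_M^{\bullet,\bullet},\bar{\partial}+\partial_{\pi})$ is not an elliptic complex.
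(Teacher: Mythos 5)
Your proposal is correct and follows essentially the same route as the paper: use the $\partial\bar{\partial}$--lemma to pick $\partial$--closed (Bott--Chern) representatives, observe via $\partial_{\pi}=\iota_{\pi}\partial-\partial\iota_{\pi}$ and $[\iota_{\pi},\bar{\partial}]=0$ that $\partial_{\pi}$ of such a representative is $\partial$--exact and $\bar{\partial}$--closed, and invoke the lemma again to make it $\partial\bar{\partial}$--exact, hence $\bar{\partial}$--exact. Your explicit zigzag iteration showing $d_r=0$ for all $r\geq 2$ (using that the $\bar{\partial}$--primitive $-\partial c$ is again $\partial$--closed) is a welcome elaboration of a step the paper passes over quickly after establishing $d_1=0$, but it is the same underlying mechanism, not a different argument.
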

\begin{proof}
Recall that a compact complex manifold $M$ satisfies the \emph{$\partial\bar{\partial}$--lemma}, if the equation
$$\ker\,\partial\cap\ker\,\bar{\partial}\cap\mathrm{im}\,d=\mathrm{im}\,\partial\bar{\partial}$$
holds for the double complex $(A^{\bullet,\bullet}_{M}, \partial, \bar{\partial})$ (cf. \cite{DGMS75}).
For any class $[\alpha]\in H^{\bullet,\bullet}_{\bar{\partial}}(M)$,
take $\beta=\partial_{}\alpha$, then we have the following
$$
\left\{ \begin{array}{ll}
\bar{\partial}\beta=\bar{\partial}\partial_{}\alpha=-\partial_{}\bar{\partial}\alpha=0,&\\
\partial_{}\beta=\partial_{}^2\alpha=0, & \\
\beta=d\alpha,  &
\end{array} \right.
$$
i.e. $\beta\in\ker\partial_{}\cap \ker\, \bar{\partial}\cap \mathrm{im} \, d$.
If $M$ satisfies the $\partial_{}\bar{\partial}$--lemma,
then there exists a $\gamma$ on $M$ such that $\beta=\partial_{}\bar{\partial}\gamma$.
Let $\tilde{\alpha}=\alpha-\bar{\partial}\gamma$.
Then we have that $[\alpha]=[\tilde{\alpha}]$ in $H^{\bullet,\bullet}_{\bar{\partial}}(M)$
and $\partial_{}\tilde{\alpha}=0$.
This means that
we can always choose the $\partial$--closed representatives
of the Dolbeault cohomology classes in $H^{\bullet,\bullet}_{\bar{\partial}}(M)$.
Now let $[\alpha]\in H^{\bullet,\bullet}_{\bar{\partial}}(M)$ such that
$\partial(\alpha)=0$. If we write $\eta=\partial_{\pi}(\alpha)$,
then by the Lemma \ref{identities}, we have
$$
\left\{ \begin{array}{ll}
\bar{\partial}\eta=0,&\\
\partial_{}\eta=0, & \\
\eta=(d\iota_{\pi}-\iota_{\pi}d)(\alpha)=d(\iota_{\pi}\alpha),  &
\end{array} \right.
$$
Once again, by the $\partial_{}\bar{\partial}$--lemma of $M$,
there exists a $\zeta$ on $M$ such that
$\partial_{\pi}(\alpha)=\partial_{}\bar{\partial}(\zeta)=-\bar{\partial}\partial_{}(\zeta)$.
Equivalently,
this means the differential of the $E_1$--page of
the Dolbeault--Koszul--Brylinski spectral sequence of $(M, \pi)$
is zero.
Consequently, we have that
the Dolbeault--Koszul--Brylinski spectral sequence of $(M, \pi)$ degenerates at $E_{1}$--page.
Thus
$$
H_{k}(M, \pi)
\cong
H_{k}(M, 0)
\cong
\bigoplus_{p-q=n-k}H^{p,q}_{\bar{\partial}}(M),
$$
and the proof is completed.
\end{proof}

It is worth noting that if $M$ is a projective manifold or K\"{a}hler manifold,
then it automatically satisfies the $\partial\bar{\partial}$--lemma.
Hence a corollary of the Theorem \ref{deg1} is the following.
\begin{corollary}\label{Kahler}
Let $(M,\pi)$ be a compact holomorphic Poisson manifold.
If $M$ is a projective manifold or K\"{a}hler manifold,
then the Dolbeault--Koszul--Brylinski spectral sequence of $(M, \pi)$ degenerates at $E_{1}$--page.
\end{corollary}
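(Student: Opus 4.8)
The plan is to deduce this as a formal consequence of Theorem \ref{deg1}. The only thing that needs checking is that a compact projective or K\"{a}hler manifold satisfies the $\partial_{}\bar{\partial}$--lemma; granting this, Theorem \ref{deg1} immediately yields the $E_{1}$--degeneracy of the Dolbeault--Koszul--Brylinski spectral sequence of $(M,\pi)$ and the displayed isomorphism. Since a smooth projective variety $M\subset\mathbb{P}^{N}$ carries the restriction of the Fubini--Study metric, which is K\"{a}hler, the projective case reduces at once to the K\"{a}hler case, so it suffices to treat a compact K\"{a}hler manifold $M$.

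For the K\"{a}hler case I would invoke Hodge theory. Fixing a K\"{a}hler metric, the K\"{a}hler identities $[\Lambda,\partial]=-i\bar{\partial}^{*}$ and $[\Lambda,\bar{\partial}]=i\partial^{*}$ give the relation $\Delta_{d}=2\Delta_{\partial}=2\Delta_{\bar{\partial}}$ among the three Laplacians; in particular $\Delta_{\partial}$, $\Delta_{\bar{\partial}}$ and $\Delta_{d}$ share a common space of harmonic forms, and each Laplacian commutes with $\partial$, $\bar{\partial}$, $\partial^{*}$, $\bar{\partial}^{*}$. Given $\alpha\in\ker\partial\cap\ker\bar{\partial}\cap\mathrm{im}\,d$, one writes $\alpha=d\beta$, decomposes $\beta$ according to the $\bar{\partial}$--Hodge decomposition, and pushes the conditions $\partial\alpha=\bar{\partial}\alpha=0$ through the decomposition, using that $d$ annihilates harmonic forms, to extract a form $\gamma$ with $\alpha=\partial\bar{\partial}\gamma$. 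This is precisely the classical argument of \cite{DGMS75}, so I would state the conclusion and refer to that paper rather than reproduce the Hodge-theoretic bookkeeping.

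Honestly, there is no real obstacle here: the corollary is immediate from Theorem \ref{deg1} once one recalls the well-known fact, going back to \cite{DGMS75}, that the $\partial_{}\bar{\partial}$--lemma holds on every compact K\"{a}hler (hence every compact projective) manifold. The only mildly technical point is the verification of the $\partial_{}\bar{\partial}$--lemma from the K\"{a}hler identities, and that is entirely standard.
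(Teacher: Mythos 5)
Your proposal is correct and follows exactly the route the paper takes: the corollary is obtained by noting that a compact projective or K\"{a}hler manifold satisfies the $\partial\bar{\partial}$--lemma (the classical fact from \cite{DGMS75}, via Hodge theory) and then applying Theorem \ref{deg1}. Your extra sketch of the K\"{a}hler-identities argument is standard and harmless, but the paper simply invokes this well-known fact, so there is nothing further to add.
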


Moreover, combined with the Proposition \ref{dual}
and Theorem \ref{deg1}, we have
\begin{corollary}
Let $(M,\pi)$ be a unimodular holomorphic Poisson manifold of complex dimension $n$.
If $M$ satisfies the $\partial\bar{\partial}$--lemma,
then
$$
H^{k}(M, \pi)
\cong
\bigoplus_{p-q=k-n}H^{p,q}_{\bar{\partial}}(M).
$$

\end{corollary}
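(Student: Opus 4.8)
The plan is to obtain this Corollary as a direct consequence of Proposition \ref{dual} combined with Theorem \ref{deg1}, the only real work being the bookkeeping of degrees. First I would invoke Proposition \ref{dual}: since $(M,\pi)$ is a unimodular holomorphic Poisson manifold of complex dimension $n$ (and, as is implicit in Sti\'enon's duality, compact), for every $k$ there is an isomorphism
\[
H^{k}(M,\pi)\;\cong\;H_{2n-k}(M,\pi).
\]
This transports the question from the holomorphic Lichnerowicz--Poisson cohomology to the holomorphic Koszul--Brylinski homology in complementary degree.

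Next, because $M$ satisfies the $\partial\bar{\partial}$--lemma, Theorem \ref{deg1} applies and gives, for every index $j$,
\[
H_{j}(M,\pi)\;\cong\;\bigoplus_{p-q=n-j}H^{p,q}_{\bar{\partial}}(M).
\]
Applying this with $j=2n-k$ and simplifying $n-(2n-k)=k-n$ yields
\[
H^{k}(M,\pi)\;\cong\;H_{2n-k}(M,\pi)\;\cong\;\bigoplus_{p-q=k-n}H^{p,q}_{\bar{\partial}}(M),
\]
which is exactly the assertion.

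I do not expect a genuine obstacle here: the statement is essentially a formal concatenation of the Evens--Lu--Weinstein--type duality and the $E_{1}$--degeneration. The points that deserve a little care are (i) checking that both hypotheses are simultaneously in force — unimodularity for Proposition \ref{dual} and the $\partial\bar{\partial}$--lemma for Theorem \ref{deg1} — which holds by assumption, and (ii) tracking the degree shift so that the summation index comes out as $p-q=k-n$ rather than $p-q=n-k$. One could also remark that under the $\partial\bar{\partial}$--lemma Hodge symmetry $H^{p,q}_{\bar{\partial}}(M)\cong H^{q,p}_{\bar{\partial}}(M)$ holds, so the right--hand side may equivalently be written as $\bigoplus_{q-p=k-n}H^{p,q}_{\bar{\partial}}(M)$, matching the shape of the preceding formulas.
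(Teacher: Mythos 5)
Your proposal is correct and is exactly the paper's intended argument: the corollary is stated as a direct combination of Proposition \ref{dual} (unimodular duality $H^{k}(M,\pi)\cong H_{2n-k}(M,\pi)$) with Theorem \ref{deg1}, and your degree bookkeeping $n-(2n-k)=k-n$ matches the stated index set. Nothing further is needed.
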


\begin{example}\label{p-x}
Suppose $\pi_{\mathbb{P}^{n}}$ is a holomorphic Poisson bi--vector field on $\mathbb{P}^{n}$.
By the Corollary \eqref{Kahler},  we have
$$
H_{k}(\mathbb{P}^{n},\pi_{\mathbb{P}^{n}})=
\left\{ \begin{array}{ll}
\mathbb{C}^{n+1},&k=n,\\
0, & k\neq n,
\end{array} \right.$$
since its Hodge numbers are $h^{p,q}=\delta_{pq}$.

Suppose $\pi$ is a holomorphic Poisson bi--vector field on $M=\mathbb{P}^{m}\times \mathbb{P}^{n}$.
By the Corollary \eqref{Kahler}
and the K\"{u}nneth's formula for Dolbeault cohomology(cf. \cite[Corollary 19]{CFGU00}),  we have
$$
H_{k}(M,\pi)=
\left\{ \begin{array}{ll}
\mathbb{C}^{(m+1)(n+1)},&k=m+n,\\
0, & k\neq n.
\end{array} \right.
$$

\end{example}
Motivated by the $\partial\bar{\partial}$--lemma,
following the work \cite{DGMS75}, we have the following definition.
\begin{definition}[{$\partial_{\pi}\bar{\partial}$--lemma}]
Let $(M,\pi)$ be a holomorphic Poisson manifold.
We say that $(M,\pi)$ satisfies the $\partial_{\pi}\bar{\partial}$--lemma if
$$
\ker\, \partial_{\pi}\cap \ker\, \bar{\partial}\cap \mathrm{im} \,(\partial_{\pi}+\bar{\partial})
= \mathrm{im}\,\partial_{\pi} \bar{\partial}.
$$
\end{definition}

An equivalent description of  $\partial_{\pi}\bar{\partial}$--lemma,
which are the special case of \cite[Lemma 5.15]{DGMS75}, states as follows.
\begin{lemma}
Let $(M,\pi)$ be a holomorphic Poisson manifold. Then
the following conditions are equivalent:
\begin{itemize}
\item[(1)]
$(M,\pi)$ satisfies the $\partial_{\pi}\bar{\partial}$--lemma;
\item[(2)]
$\ker\bar{\partial} \cap \mathrm{im}\partial_{\pi}= \mathrm{im}\partial_{\pi}\bar{\partial} ,
\ker\partial_{\pi} \cap \mathrm{im}\bar{\partial}= \mathrm{im}\partial_{\pi}\bar{\partial}$;
\item[(3)]
$\ker\bar{\partial} \cap \ker\partial_{\pi} \cap (\mathrm{im}\partial_{\pi}+\mathrm{im}\bar{\partial})= \mathrm{im}\partial_{\pi}\bar{\partial}.$
\end{itemize}
\end{lemma}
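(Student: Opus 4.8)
The plan is to treat this entirely as a homological statement about the bigraded vector space $A_M^{\bullet,\bullet}$ equipped with the two operators $\partial_{\pi}$ and $\bar{\partial}$. By Lemma \ref{identities} we have $\partial_{\pi}^{2}=\bar{\partial}^{2}=0$ and $\partial_{\pi}\bar{\partial}+\bar{\partial}\partial_{\pi}=0$, and on $A_M^{p,q}$ the operator $\partial_{\pi}$ has bidegree $(-1,0)$ while $\bar{\partial}$ has bidegree $(0,1)$, so the two differentials move the bigrading in linearly independent directions. This is exactly the formal situation of the $(\partial,\bar{\partial})$--bicomplex treated in \cite[Lemma 5.15]{DGMS75}, so I would run the same argument. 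The inclusion $\mathrm{im}\,\partial_{\pi}\bar{\partial}\subseteq(\text{the relevant left-hand side})$ in $(1)$, $(2)$ and $(3)$ is immediate, since for any $\gamma$ the element $\partial_{\pi}\bar{\partial}\gamma$ lies in $\ker\partial_{\pi}$, in $\ker\bar{\partial}$ (as $\bar{\partial}\partial_{\pi}\bar{\partial}\gamma=-\partial_{\pi}\bar{\partial}^{2}\gamma=0$), in $\mathrm{im}\,\partial_{\pi}$, in $\mathrm{im}\,\bar{\partial}$ (as $\partial_{\pi}\bar{\partial}\gamma=-\bar{\partial}\partial_{\pi}\gamma$), and in $\mathrm{im}(\partial_{\pi}+\bar{\partial})$ (as $\partial_{\pi}\bar{\partial}\gamma=(\partial_{\pi}+\bar{\partial})(\bar{\partial}\gamma)$, using $\bar{\partial}^{2}\gamma=0$). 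Thus only the reverse inclusions carry content, and it suffices to prove the cycle $(1)\Rightarrow(2)\Rightarrow(3)\Rightarrow(1)$.

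For $(1)\Rightarrow(2)$: since $\ker\bar{\partial}\cap\mathrm{im}\,\partial_{\pi}$ is a bigraded subspace, take $x$ in it bihomogeneous and a bihomogeneous preimage $y$ with $x=\partial_{\pi}y$. Then $z:=\bar{\partial}y+x=(\partial_{\pi}+\bar{\partial})y$ is $\partial_{\pi}$--closed ($\partial_{\pi}z=\partial_{\pi}\bar{\partial}y=-\bar{\partial}\partial_{\pi}y=-\bar{\partial}x=0$), $\bar{\partial}$--closed, and in $\mathrm{im}(\partial_{\pi}+\bar{\partial})$, so $(1)$ gives $z=\partial_{\pi}\bar{\partial}w$; since $\bar{\partial}y$ and $x$ lie in distinct bidegrees, comparing bihomogeneous components isolates $x\in\mathrm{im}\,\partial_{\pi}\bar{\partial}$. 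The symmetric argument gives $\ker\partial_{\pi}\cap\mathrm{im}\,\bar{\partial}=\mathrm{im}\,\partial_{\pi}\bar{\partial}$, so $(2)$ holds. For $(2)\Rightarrow(3)$: take $x\in\ker\partial_{\pi}\cap\ker\bar{\partial}$ bihomogeneous with $x=\partial_{\pi}a+\bar{\partial}b$; applying $\bar{\partial}$ and using $\bar{\partial}\partial_{\pi}a=-\partial_{\pi}\bar{\partial}a$ and $\bar{\partial}x=0$ shows $\partial_{\pi}\bar{\partial}a=0$, so $\bar{\partial}a\in\ker\partial_{\pi}\cap\mathrm{im}\,\bar{\partial}=\mathrm{im}\,\partial_{\pi}\bar{\partial}$. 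Writing $\bar{\partial}a=\partial_{\pi}\bar{\partial}c$ and replacing $a$ by $a+\partial_{\pi}c$ leaves $\partial_{\pi}a$ and $x$ unchanged but makes $a$ $\bar{\partial}$--closed; then $\partial_{\pi}a=x-\bar{\partial}b\in\ker\bar{\partial}\cap\mathrm{im}\,\partial_{\pi}=\mathrm{im}\,\partial_{\pi}\bar{\partial}$, say $\partial_{\pi}a=\partial_{\pi}\bar{\partial}e$, whence $x=\bar{\partial}(b-\partial_{\pi}e)\in\mathrm{im}\,\bar{\partial}$, and as $x\in\ker\partial_{\pi}$ the second identity of $(2)$ forces $x\in\mathrm{im}\,\partial_{\pi}\bar{\partial}$. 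Finally $(3)\Rightarrow(1)$ is elementary: $\mathrm{im}(\partial_{\pi}+\bar{\partial})\subseteq\mathrm{im}\,\partial_{\pi}+\mathrm{im}\,\bar{\partial}$, so $\ker\partial_{\pi}\cap\ker\bar{\partial}\cap\mathrm{im}(\partial_{\pi}+\bar{\partial})$ sits inside the left-hand side of $(3)$, which equals $\mathrm{im}\,\partial_{\pi}\bar{\partial}$.

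There is no deep obstacle here; the one point that needs genuine care in the final write-up is the bigrading bookkeeping. At several steps one obtains an element of $\mathrm{im}\,\partial_{\pi}\bar{\partial}$ that is \emph{a priori} a sum of components in distinct bidegrees, and one must invoke that $\partial_{\pi}$ and $\bar{\partial}$ move the bigrading in independent directions — so that $\ker\partial_{\pi}$, $\ker\bar{\partial}$, $\mathrm{im}\,\partial_{\pi}$, $\mathrm{im}\,\bar{\partial}$ and $\mathrm{im}\,\partial_{\pi}\bar{\partial}$ are all bigraded subspaces of $A_M^{\bullet,\bullet}$ — in order to pass from an inhomogeneous identity to the desired bihomogeneous one. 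Because $\partial_{\pi}$ has bidegree $(-1,0)$ and $\bar{\partial}$ has bidegree $(0,1)$ this is automatic, and the argument of \cite{DGMS75} transfers verbatim.
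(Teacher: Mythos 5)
Your argument is correct: the trivial inclusions, the cycle $(1)\Rightarrow(2)\Rightarrow(3)\Rightarrow(1)$, and the use of the fact that $\partial_{\pi}$ and $\bar{\partial}$ have linearly independent bidegrees $(-1,0)$ and $(0,1)$ (so that $\ker$, $\mathrm{im}$ and $\mathrm{im}\,\partial_{\pi}\bar{\partial}$ are bigraded and components can be compared) all check out. This is essentially the same route the paper takes, since the paper simply cites \cite[Lemma 5.15]{DGMS75} and your write-up just makes that transferred argument explicit.
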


Another two closely related cohomologies we need in this paper are
the Bott--Chern cohomology and the Aeppli cohomology.
These two cohomologies are the special case of Angella and Tomassini \cite{AT15}.
\begin{definition}
Let $(M,\pi)$ be a holomorphic Poisson manifold.
The {\it $(p,q)$--th Bott--Chern cohomology} of $(M,\pi)$ is defined as
$$
H_{BC}^{p,q}(M,\pi):=
\frac{\ker\,\partial_{\pi}\cap \ker\,\bar{\partial}}{\mathrm{im}\, \partial_{\pi}\bar{\partial}},
$$
while the {\it $(p,q)$--th Aeppli cohomology} of $(M,\pi)$ is defined as
$$
H_{A}^{p,q}(M,\pi):=
\frac{\ker\,\partial_{\pi}\bar{\partial}}{\mathrm{im}\, \partial_{\pi}+\mathrm{im}\, \bar{\partial}}.
$$
\end{definition}
One can check that the identity map induces natural morphisms
\begin{equation}\label{morphisms}
\vcenter{
\xymatrix@R=0.5cm{
                &         H_{BC}^{\bullet,\bullet}(M,\pi) \ar[dl]^{} \ar[d]^{}\ar[dr]^{}  & \\
  H_{\partial_{\pi}}^{\bullet,\bullet}(M) \ar[dr]_{} & H_{\bullet}(M, \pi)\ar[d]^{} &   H_{\bar{\partial}}^{\bullet,\bullet}(M) \ar[dl]^{}    \\
                &         H_{A}^{\bullet,\bullet}(M,\pi)   &            }
}
\end{equation}
since $\ker\, \partial_{\pi}\cap \ker\, \bar{\partial}\subset\ker\,(\partial_{\pi}+\bar{\partial})
\subset\ker\,\partial_{\pi}\bar{\partial}$
and
$\mathrm{im}\,\partial_{\pi} \bar{\partial}
\subset \mathrm{im}\,(\partial_{\pi}+ \bar{\partial})
\subset \mathrm{im}\,\partial_{\pi}+ \mathrm{im}\,\bar{\partial}$.
Generally, each morphism in \ref{morphisms} is neither injective nor surjective.
The following theorem is an application of \cite[Theorem 1 \& Theorem 2 \& Lemma 2.4]{AT15}.

\begin{theorem}\label{lem1}
Let $(M,\pi)$ be a holomorphic Poisson manifold. Then
\begin{itemize}
\item[(1)]
$\sum\limits_{p+q=k}\big(\mathrm{dim}_{\mathbb{C}} H_{BC}^{p,q}(M,\pi)
+\mathrm{dim}_{\mathbb{C}}H_{A}^{p,q}(M,\pi)\big)
\geq
2\mathrm{dim}_{\mathbb{C}}H_{k}(M,\pi)$;
\item[(2)]the identity holds if and only if $(M,\pi)$ satisfies the $\partial_{\pi}\bar{\partial}$--lemma.
In this case, all morphisms in the diagram \ref{morphisms} are isomorphic.
\end{itemize}
\end{theorem}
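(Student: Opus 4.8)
The plan is to derive both parts from the general Frölicher-type inequalities of Angella and Tomassini \cite{AT15}, applied to the double complex $(\Gamma(M,\A_M^{\bullet,\bullet}),\partial_\pi,\bar\partial)$ of differential forms on $M$. The first step is to check that this is genuinely a double complex of $\mathbb{C}$-vector spaces of the kind treated in \cite{AT15}: Lemma \ref{identities} supplies $\partial_\pi^2=0$, $\bar\partial^2=0$ and $\partial_\pi\bar\partial+\bar\partial\partial_\pi=0$, and on an $n$-dimensional $M$ the bidegrees are confined to $0\le p,q\le n$, so the complex is bounded. The only subtlety is that $\partial_\pi$ \emph{lowers} the holomorphic form degree, so to match the conventions of \cite{AT15} (in which both differentials raise degree) one reindexes, for instance by placing $\A_M^{p,q}$ in total bidegree $(n-p,q)$. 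On a compact $M$ all the cohomologies occurring below are finite-dimensional, so the numerical statement is meaningful.

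The second step is the dictionary between the cohomologies of this double complex and the objects in diagram \eqref{morphisms}. By construction, the Bott--Chern and Aeppli cohomologies of $(\A_M^{\bullet,\bullet},\partial_\pi,\bar\partial)$ are exactly $H_{BC}^{\bullet,\bullet}(M,\pi)$ and $H_A^{\bullet,\bullet}(M,\pi)$; the cohomologies of its columns and rows are the Dolbeault cohomology $H_{\bar\partial}^{\bullet,\bullet}(M)$ and the $\partial_\pi$-cohomology $H_{\partial_\pi}^{\bullet,\bullet}(M)$; and, by Lemma \ref{KB-equal1}, the cohomology of the associated total complex (with differential $\partial_\pi+\bar\partial$) is the Koszul--Brylinski homology $H_\bullet(M,\pi)$, suitably graded. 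Thus $H_\bullet(M,\pi)$ plays precisely the role occupied by de Rham cohomology in the classical Dolbeault double complex, and the arrows of \eqref{morphisms} are exactly the canonical comparison maps of this double complex.

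With these identifications, part (1) is a transcription of the Frölicher-type inequality of \cite[Theorems 1 and 2]{AT15} to our situation:
\[
\sum_{p+q=k}\bigl(\dim_{\mathbb{C}}H_{BC}^{p,q}(M,\pi)+\dim_{\mathbb{C}}H_A^{p,q}(M,\pi)\bigr)\ \geq\ 2\,\dim_{\mathbb{C}}H_k(M,\pi)
\]
for every $k$. For part (2), the same results together with \cite[Lemma 2.4]{AT15} show that equality holds in every degree if and only if the double complex satisfies the $\partial_\pi\bar\partial$-lemma, and that in that case all of its structural comparison maps --- hence all morphisms in \eqref{morphisms} --- are isomorphisms; this is the exact analogue for $(\A_M^{\bullet,\bullet},\partial_\pi,\bar\partial)$ of the DGMS characterization of the ordinary $\partial\bar\partial$-lemma invoked in the proof of Theorem \ref{deg1}. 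I expect the only real point of care, as opposed to a routine estimate, to be this bookkeeping: correctly reconciling the degree conventions of \cite{AT15} with the fact that $\partial_\pi$ decreases the holomorphic degree, and checking that the resulting total degree matches the homological index of $H_\bullet(M,\pi)$ (cf.\ the relation $p-q=n-k$ appearing in Theorem \ref{deg1}), so that the inequality and the characterization land on $H_k(M,\pi)$ as written.
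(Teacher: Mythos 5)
Your proposal is correct and follows essentially the same route as the paper, which states the result precisely as an application of \cite[Theorem 1, Theorem 2, Lemma 2.4]{AT15} to the bounded double complex $(\Gamma(M,\mathcal{A}_M^{\bullet,\bullet}),\partial_\pi,\bar\partial)$; your additional care about reindexing (since $\partial_\pi$ lowers the holomorphic degree) and about finite-dimensionality on a compact $M$ is exactly the bookkeeping the paper leaves implicit.
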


For more characterizations of the $\partial_{\pi}\bar{\partial}$--lemma
we refer to \cite[Proposition 5.17]{DGMS75}.
Especially,
an analogous result to Theorem \ref{deg1}
whose proof is obtained by replacing the operator $\partial_{}$ with $\partial_{\pi}$
states as follows.

\begin{theorem}\label{deg2}
Let $(M,\pi)$ be a compact holomorphic Poisson manifold.
If $(M,\pi)$ satisfies the $\partial_{\pi}\bar{\partial}$--lemma,
then the Dolbeault--Koszul--Brylinski spectral sequence $E_{\bullet}$
degenerate at the first page, or
$$
H_{k}(M, \pi)
\cong
\bigoplus_{p-q=n-k}H^{p,q}_{\partial_{\pi}}(M).
$$
\end{theorem}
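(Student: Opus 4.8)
The plan is to repeat the proof of Theorem \ref{deg1} with the operator $\partial$ replaced everywhere by $\partial_\pi$. This is legitimate because $\partial_\pi$ anticommutes with $\bar\partial$ and squares to zero (Lemma \ref{identities}), so it plays formally the same role that $\partial$ plays there, and because the $\partial_\pi\bar\partial$--lemma is a statement about precisely the pair $(\partial_\pi,\bar\partial)$ of differentials of the double complex $(A_M^{\bullet,\bullet},\partial_\pi,\bar\partial)$ underlying the Dolbeault--Koszul--Brylinski spectral sequence, and by its equivalent characterisations is symmetric in the two. In particular I expect degeneration of \emph{both} spectral sequences of this double complex: the Dolbeault--Koszul--Brylinski one, whose $E_1$--page is $H^{\bullet,\bullet}_{\bar\partial}(M)$, and the one obtained by first taking $\partial_\pi$--cohomology, whose $E_1$--page is $H^{\bullet,\bullet}_{\partial_\pi}(M)$; the isomorphism in the statement is read off from the second.

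\textbf{Adjusting representatives and vanishing of $d_1$.} Mimicking the first half of the proof of Theorem \ref{deg1}, I would first show that every class of $H^{p,q}_{\partial_\pi}(M)$ has a $\bar\partial$--closed representative: given $\alpha$ with $\partial_\pi\alpha=0$, put $\beta=\bar\partial\alpha$; then $\partial_\pi\beta=-\bar\partial\partial_\pi\alpha=0$, $\bar\partial\beta=0$, and $\beta=(\partial_\pi+\bar\partial)\alpha\in\mathrm{im}(\partial_\pi+\bar\partial)$, so by the $\partial_\pi\bar\partial$--lemma there is $\gamma$ with $\beta=\partial_\pi\bar\partial\gamma$; then $\tilde\alpha:=\alpha+\partial_\pi\gamma$ represents the same class in $H^{\bullet,\bullet}_{\partial_\pi}(M)$ and $\bar\partial\tilde\alpha=\bar\partial\alpha+\bar\partial\partial_\pi\gamma=\beta-\partial_\pi\bar\partial\gamma=0$. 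For such a representative the first differential of the spectral sequence whose $E_1$--page is $H^{\bullet,\bullet}_{\partial_\pi}(M)$, being induced by $\bar\partial$, sends its class to $0$, so $d_1=0$ there; the symmetric argument (every $H^{p,q}_{\bar\partial}(M)$--class has a $\partial_\pi$--closed representative, on which the $\partial_\pi$--induced differential vanishes) shows $d_1=0$ for the Dolbeault--Koszul--Brylinski spectral sequence itself, which in particular re-derives the conclusion of Theorem \ref{deg1} under the present hypothesis.

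\textbf{From $d_1=0$ to $E_1$--degeneration: the main obstacle.} The step that genuinely needs care is that the vanishing of $d_1$ does not by itself force the higher differentials to vanish. I would close this by a dimension count. By Theorem \ref{lem1}(2) the $\partial_\pi\bar\partial$--lemma makes every arrow in \eqref{morphisms} an isomorphism; the arrows $H^{p,q}_{BC}(M,\pi)\to H^{p,q}_{\bar\partial}(M)$ and $H^{p,q}_{BC}(M,\pi)\to H^{p,q}_{\partial_\pi}(M)$ preserve bidegree, and $\bigoplus_{p-q=n-k}H^{p,q}_{BC}(M,\pi)\cong H_k(M,\pi)$, so for every $k$
$$\dim_{\mathbb{C}}H_k(M,\pi)=\sum_{p-q=n-k}\dim_{\mathbb{C}}H^{p,q}_{\bar\partial}(M)=\sum_{p-q=n-k}\dim_{\mathbb{C}}H^{p,q}_{\partial_\pi}(M).$$
By Lemma \ref{deg} (and its analogue for the second spectral sequence) this forces $E_1$--degeneration of both spectral sequences, and in particular
$$H_k(M,\pi)\cong\bigoplus_{p-q=n-k}H^{p,q}_{\partial_\pi}(M).$$
An alternative to the dimension count would be to iterate the zig-zag of the previous paragraph, successively adjusting representatives so that all the higher zig-zags close up, exactly as in the Deligne--Griffiths--Morgan--Sullivan decomposition of a $\partial_\pi\bar\partial$--double complex into ``squares'' and ``dots''.
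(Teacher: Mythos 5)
Your proposal is correct, and its first half is exactly the paper's route: the paper proves this theorem simply by declaring that one repeats the proof of Theorem \ref{deg1} with $\partial$ replaced by $\partial_{\pi}$, i.e.\ one adjusts representatives (every $H_{\bar{\partial}}$--class has a $\partial_{\pi}$--closed representative, and symmetrically) and observes that the induced differential on the $E_1$--page vanishes. Where you genuinely diverge is in how degeneration is concluded. The paper, like its proof of Theorem \ref{deg1}, passes directly from ``$d_1=0$'' to ``degenerates at $E_1$'', leaving the higher differentials implicit; you are right that this step is not automatic, and you close it with a dimension count: under the $\partial_{\pi}\bar{\partial}$--lemma, Theorem \ref{lem1}(2) makes all arrows in diagram \eqref{morphisms} isomorphisms, hence $\dim_{\mathbb{C}}H_k(M,\pi)=\sum_{p-q=n-k}\dim_{\mathbb{C}}H^{p,q}_{\bar\partial}(M)=\sum_{p-q=n-k}\dim_{\mathbb{C}}H^{p,q}_{\partial_\pi}(M)$, and Lemma \ref{deg} (and its analogue for the other filtration) then forces $E_1$--degeneration of both spectral sequences of the double complex $(A_M^{\bullet,\bullet},\partial_\pi,\bar\partial)$. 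What your argument buys is a complete justification of the jump from $d_1=0$ to full degeneration, and it simultaneously yields the isomorphism with $H^{\bullet,\bullet}_{\partial_\pi}(M)$ appearing in the statement (which lives on the \emph{other} spectral sequence of the double complex), a point the paper's one-line proof glosses over; what the paper's approach buys is brevity and uniformity with Theorem \ref{deg1}, at the cost of leaving the higher-differential issue (equivalently, an appeal to the full DGMS-type decomposition or to the characterizations in \cite[Proposition 5.17]{DGMS75}) unstated. Your alternative suggestion of iterating the zig-zag à la Deligne--Griffiths--Morgan--Sullivan would also work and is closer in spirit to what the paper implicitly relies on.
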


\begin{remark}
One can check that
for a complex manifold $M$ with trivial holomorphic Poisson bi--vector field $\pi$,
it does not satisfy the $\partial_{\pi}\bar{\partial}$--lemma,
but its Dolbeault--Koszul--Brylinski spectral sequence degenerates at the first page.
In Section \ref{c6} more examples
whose Dolbeault--Koszul--Brylinski spectral sequence degenerate at $E_{1}$--page
but do not satisfy the $\partial_{\pi}\bar{\partial}$--lemma  are given.
\end{remark}

\section{Formality of the Dolbeault complex}\label{c3}
Recall for a holomorphic Poisson manifold $(M, \pi)$,
the Koszul--Brylinski operator $\partial_{\pi}$ is a BV operator,
and it generates a Lie bracket $[\alpha, \beta]_{\partial_{\pi}}$.
In this section we consider its formality properties with respect to variant differentials.

\begin{lemma}\label{DGL}
Suppose $(M, \pi)$ is a holomorphic Poisson manifold.
Then $(A_{M}^{\bullet,\bullet},\partial,[-,-]_{\partial_\pi})$,
$(A_{M}^{\bullet,\bullet},\bar{\partial},[-,-]_{\partial_\pi})$
and
$(A_{M}^{\bullet,\bullet},d,[-,-]_{\partial_\pi})$
are three differential Gerstenhaber algebras.
\end{lemma}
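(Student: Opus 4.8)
To show that the three triples $(A_M^{\bullet,\bullet},\partial,[-,-]_{\partial_\pi})$, $(A_M^{\bullet,\bullet},\bar\partial,[-,-]_{\partial_\pi})$ and $(A_M^{\bullet,\bullet},d,[-,-]_{\partial_\pi})$ are differential Gerstenhaber algebras, I would first record that the data common to all three is fixed once and for all: the graded-commutative product is the wedge product $\wedge$ on $A_M^{\bullet,\bullet}$, with the degree of a $(p,q)$-form taken to be $p$ (so the Gerstenhaber/Poisson bracket has degree $-1$ in that grading), and the bracket is the fixed bilinear operation $[-,-]_{\partial_\pi}$ defined by \eqref{Lie str on forms1}. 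That $[-,-]_{\partial_\pi}$ is a Gerstenhaber bracket on $\Omega_M^\bullet$ compatible with $\wedge$ was already asserted in the discussion around \eqref{Lie str on forms1}–\eqref{Lie str on forms2}; the same verification goes through verbatim on the sheaf $\mathcal{A}_M^{\bullet,\bullet}$ of smooth $(\bullet,\bullet)$-forms, since $\partial_\pi=\iota_\pi\partial-\partial\iota_\pi$ and the contraction $\iota_\pi$ make sense on smooth forms and $\partial_\pi$ is again a BV (i.e. second-order) operator there. So the only genuinely new content of the lemma is the \emph{compatibility of each of the three differentials $D\in\{\partial,\bar\partial,d\}$ with the product and the bracket}: one must check that $D$ is a derivation of $(A_M^{\bullet,\bullet},\wedge)$ of degree $+1$ with $D^2=0$, and that $D$ is a derivation of the bracket, i.e.
\[
D[\alpha,\beta]_{\partial_\pi}=[D\alpha,\beta]_{\partial_\pi}+(-1)^{|\alpha|-1}[\alpha,D\beta]_{\partial_\pi}.
\]

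**Key steps.** First, the three differentials: $\partial$ and $\bar\partial$ are the usual Dolbeault operators, square to zero, and are derivations of $\wedge$; $d=\partial+\bar\partial$ inherits $d^2=0$ and the Leibniz rule. So the "differential graded algebra" part is classical and I would dispose of it in one sentence. Second, and this is the heart, the bracket compatibility. The clean way is to use the characterization of $[-,-]_{\partial_\pi}$ via \eqref{Lie str on forms1} as the obstruction to $\partial_\pi$ being a derivation of $\wedge$, together with the commutation identities of Lemma~\ref{identities}. Concretely: since $\bar\partial$ is a derivation of $\wedge$ and, by Lemma~\ref{identities}(1), $\bar\partial\partial_\pi=-\partial_\pi\bar\partial$, a direct expansion of $\bar\partial\big(\partial_\pi(\alpha\wedge\beta)-\partial_\pi\alpha\wedge\beta-(-1)^{|\alpha|}\alpha\wedge\partial_\pi\beta\big)$ and regrouping shows $\bar\partial$ is a derivation of $[-,-]_{\partial_\pi}$; the same computation with $\partial$ in place of $\bar\partial$ uses Lemma~\ref{identities}(2) ($\partial\partial_\pi=-\partial_\pi\partial$), and the $d$ case is then just the sum of the other two. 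Alternatively — and I think this is the slicker writeup — one invokes the general algebraic fact that if $(\mathcal{A},\wedge)$ is a graded-commutative algebra, $\Delta$ a BV operator with induced bracket $[-,-]_\Delta$, and $D$ a derivation of $\wedge$ of odd degree with $D\Delta+\Delta D=0$, then $D$ is automatically a derivation of $[-,-]_\Delta$; one then applies this with $\Delta=\partial_\pi$ and $D\in\{\partial,\bar\partial,d\}$, checking the anticommutation $D\partial_\pi+\partial_\pi D=0$ from Lemma~\ref{identities}(1)–(2) (for $d$, add the two relations).

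**Main obstacle.** There is no deep obstacle — the statement is essentially a bookkeeping consequence of Lemma~\ref{identities} plus the already-asserted Gerstenhaber property of $[-,-]_{\partial_\pi}$. The one point that needs a little care, and where I would be most careful in writing, is sign consistency: the bracket has degree $-1$, so the Leibniz rule for $D$ acting on $[\alpha,\beta]_{\partial_\pi}$ carries the sign $(-1)^{|\alpha|-1}$ rather than $(-1)^{|\alpha|}$, and when I expand $D(\alpha\wedge\beta)$ inside formula \eqref{Lie str on forms1} the prefactor $(-1)^{|\alpha|}$ interacts with $D$'s own Leibniz signs; I would fix the degree convention (degree $=$ holomorphic form-degree $p$) explicitly at the outset so these signs are unambiguous. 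The other thing worth a remark is that everything is local and natural in $M$, so it suffices to verify the identities on the sheaf level, i.e. for forms with coefficients in coordinate functions, which reduces each identity to a finite computation already encapsulated in Lemma~\ref{identities}.
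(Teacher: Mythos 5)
Your proposal is correct and follows essentially the same route as the paper: the paper's proof likewise notes that $[-,-]_{\partial_\pi}$ already makes $A_M^{\bullet,\bullet}$ a (Gerstenhaber) Lie algebra and then invokes Lemma~\ref{identities} to conclude that $\partial$, $\bar\partial$ and $d$ are derivations of the bracket. Your extra care with the degree convention and the general BV-operator argument is a fuller write-up of the same two-line idea, not a different method.
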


\begin{proof}
Note
$(A_{M}^{\bullet,\bullet},[-,-]_{\partial_\pi})$
is a Lie algebra.
By the Lemma \ref{identities},
all three differential $\partial, \bar{\partial}$ and $d$ are derivations
with respect to the bracket $[-,-]_{\partial_\pi}$,
thus we have the lemma.
\end{proof}

For any $k\geq1$, the map
\begin{equation}\label{contraction}
  \iota_{\pi}^k: (A_{M}^{\bullet,\bullet},\partial_{\pi},\bar{\partial})
  \to (A_{M}^{\bullet-2k,\bullet},\partial_{\pi},\bar{\partial})
\end{equation}
is a well--defined morphism of double complexes
since by the Lemma \ref{identities},
$\iota_{\pi}^{k}\partial_{\pi}=\partial_{\pi}\iota_{\pi}^{k}$
and $\iota_{\pi}^{k}\bar{\partial}=\bar{\partial}\iota_{\pi}^{k}$.
Thus the operator
$$e^{\iota_{\pi}}:=\sum_{k=0}\frac{1}{k!}\iota_{\pi}^k,$$
is well--defined on $(A_{M}^{\bullet,\bullet},\partial_{\pi},\bar{\partial})$ with inverse $e^{-\iota_{\pi}}$.
\begin{lemma}\label{f1}
For any natural number $k$,
$\iota_{\pi}^k\partial=\partial\iota_{\pi}^k+k\iota_{\pi}^{k-1}\partial_{\pi}$.
Moreover, we have that
\begin{equation}\label{f2}
  e^{\iota_{\pi}}\partial =(\partial+\partial_\pi)e^{\iota_{\pi}}.
\end{equation}
\end{lemma}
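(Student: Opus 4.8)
The plan is to prove the identity $\iota_{\pi}^k\partial=\partial\iota_{\pi}^k+k\iota_{\pi}^{k-1}\partial_{\pi}$ by induction on $k$, and then obtain the exponential formula \eqref{f2} by summing the series. The base case $k=0$ is trivial and $k=1$ is essentially the definition of $\partial_{\pi}$: indeed $\partial_{\pi}=\iota_{\pi}\partial-\partial\iota_{\pi}$ rearranges to $\iota_{\pi}\partial=\partial\iota_{\pi}+\partial_{\pi}$, which is the claimed formula with $k=1$ (noting $\iota_{\pi}^0=\mathrm{id}$).

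For the inductive step, I would assume the formula for $k$ and compute $\iota_{\pi}^{k+1}\partial=\iota_{\pi}\big(\iota_{\pi}^k\partial\big)=\iota_{\pi}\big(\partial\iota_{\pi}^k+k\iota_{\pi}^{k-1}\partial_{\pi}\big)$. On the first summand I apply the $k=1$ case to rewrite $\iota_{\pi}\partial=\partial\iota_{\pi}+\partial_{\pi}$, giving $\partial\iota_{\pi}^{k+1}+\partial_{\pi}\iota_{\pi}^k$. On the second summand I use that $\iota_{\pi}$ commutes with $\partial_{\pi}$ (Lemma \ref{identities}(3)) to get $k\iota_{\pi}^k\partial_{\pi}$. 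Since $\partial_{\pi}\iota_{\pi}^k=\iota_{\pi}^k\partial_{\pi}$ again by Lemma \ref{identities}(3), the two contributions combine to $(k+1)\iota_{\pi}^k\partial_{\pi}$, and we obtain $\iota_{\pi}^{k+1}\partial=\partial\iota_{\pi}^{k+1}+(k+1)\iota_{\pi}^k\partial_{\pi}$, completing the induction.

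For the exponential formula, I would apply $e^{\iota_{\pi}}\partial=\sum_{k\geq 0}\frac{1}{k!}\iota_{\pi}^k\partial$ and substitute the identity just proved:
\[
e^{\iota_{\pi}}\partial=\sum_{k\geq 0}\frac{1}{k!}\big(\partial\iota_{\pi}^k+k\iota_{\pi}^{k-1}\partial_{\pi}\big)
=\partial\sum_{k\geq 0}\frac{1}{k!}\iota_{\pi}^k+\sum_{k\geq 1}\frac{1}{(k-1)!}\iota_{\pi}^{k-1}\partial_{\pi}
=\partial\, e^{\iota_{\pi}}+e^{\iota_{\pi}}\partial_{\pi}.
\]
Finally, since $\iota_{\pi}$ and hence $e^{\iota_{\pi}}$ commutes with $\partial_{\pi}$, the last term equals $\partial_{\pi}e^{\iota_{\pi}}$, so $e^{\iota_{\pi}}\partial=(\partial+\partial_{\pi})e^{\iota_{\pi}}$. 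I should note that the series are finite sums on any fixed form (since $\iota_{\pi}$ drops form-degree by $2$), so all rearrangements are legitimate; this is really the only point that needs a word of care, and it is minor. I do not anticipate a genuine obstacle here: everything reduces to the two commutation identities from Lemma \ref{identities} plus bookkeeping with the exponential series.
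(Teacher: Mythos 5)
Your proposal is correct and follows essentially the same route as the paper: induction on $k$ using $\iota_{\pi}\partial=\partial\iota_{\pi}+\partial_{\pi}$ together with the commutation $\partial_{\pi}\iota_{\pi}=\iota_{\pi}\partial_{\pi}$ from Lemma \ref{identities}(3), then summing the exponential series. Your extra remarks (making explicit that $e^{\iota_{\pi}}$ commutes with $\partial_{\pi}$ and that the series is a finite sum on each form) are just slightly more careful versions of steps the paper leaves implicit.
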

\begin{proof}
Inductively,
$$
\iota_{\pi}^{k+1}\partial
=\iota_{\pi}(\partial\iota_{\pi}^k+k\iota_{\pi}^{k-1}\partial_{\pi})
=\iota_{\pi}\partial\iota_{\pi}^k+k\iota_{\pi}^{k}\partial_{\pi}
=\partial\iota_{\pi}^{k+1}+(k+1)\iota_{\pi}^{k}\partial_{\pi}.
$$
Thus, we have
\begin{center}
$
e^{\iota_{\pi}}\partial
=\sum\limits_{k=0}\frac{1}{k!}\iota_{\pi}^k\partial
=\sum\limits_{k=0}\frac{1}{k!}(\partial\iota_{\pi}^k+k\iota_{\pi}^{k-1}\partial_{\pi})
=(\partial+\partial_\pi)e^{\iota_{\pi}},
$
\end{center}
and the lemma is proved.
\end{proof}
\begin{theorem}[{\cite[Corollary 2]{ST08}} \& {\cite[Theorem 3.2]{FM12}}]
The DGLAs
$(A_M^{\bullet,\bullet},\partial,[-,-]_{\partial_\pi})$
and
$(A_M^{\bullet,\bullet},d,[-,-]_{\partial_\pi})$
are formal
and quasi--isomorphic to abelian Lie algebras.
\end{theorem}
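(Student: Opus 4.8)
The plan is to exhibit an explicit quasi-isomorphism from each of these DGLAs to an abelian one, using the contraction operator $\iota_\pi$ as the main tool. The key observation, already encoded in Lemma \ref{f1}, is that $e^{\iota_\pi}$ conjugates the operator $\partial$ into $\partial + \partial_\pi$; the same computation (using Lemma \ref{identities}, in particular $\iota_\pi\bar\partial = \bar\partial\iota_\pi$) shows $e^{\iota_\pi}$ commutes with $\bar\partial$, hence also intertwines $d = \partial + \bar\partial$ with $d + \partial_\pi$. So I would first record that $e^{\iota_\pi}$ is an isomorphism of the underlying graded vector space carrying $(A_M^{\bullet,\bullet}, \partial)$ to $(A_M^{\bullet,\bullet}, \partial + \partial_\pi)$ and $(A_M^{\bullet,\bullet}, d)$ to $(A_M^{\bullet,\bullet}, d + \partial_\pi)$.

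Next I would check that $e^{\iota_\pi}$ is compatible with the bracket $[-,-]_{\partial_\pi}$, or rather transports it to the \emph{trivial} bracket. The point is the standard fact that for a BV operator, contraction by the corresponding bivector (more precisely, the family of operators generated by $\iota_\pi$) is a morphism of Gerstenhaber algebras from $(A_M^{\bullet,\bullet}, \wedge, [-,-]_{\partial_\pi})$ to $A_M^{\bullet,\bullet}$ equipped with the \emph{zero} bracket — this is essentially Sharygin--Talalaev \cite{ST08}, and it is the algebraic incarnation of the statement that $\partial_\pi$ is a "generator of square zero" whose exponential of contractions linearizes the bracket. Concretely, one shows $e^{\iota_\pi}[\alpha,\beta]_{\partial_\pi} = 0$ by the BV identity (\ref{Lie str on forms1}) together with $e^{\iota_\pi}(\alpha\wedge\beta) = e^{\iota_\pi}\alpha \wedge e^{\iota_\pi}\beta$ — wait, that last equality fails for a general bivector, so the cleaner route is to note that the bracket $[-,-]_{\partial_\pi}$ on the nose becomes zero after the change of differential because it is the obstruction to $\partial_\pi$ being a derivation, and $\partial_\pi$ becomes (part of) the differential. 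I would therefore phrase it as: under $e^{\iota_\pi}$, the DGLA $(A_M^{\bullet,\bullet}, \partial, [-,-]_{\partial_\pi})$ is isomorphic to $(A_M^{\bullet,\bullet}, \partial + \partial_\pi, 0)$, an abelian DGLA, and likewise $(A_M^{\bullet,\bullet}, d, [-,-]_{\partial_\pi}) \cong (A_M^{\bullet,\bullet}, d + \partial_\pi, 0)$.

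Finally, to conclude formality I would pass to cohomology: an abelian DGLA is automatically formal (it is quasi-isomorphic to its cohomology with zero bracket and zero differential), so it suffices to know these isomorphisms are DGLA isomorphisms, which they are since $e^{\iota_\pi}$ is invertible with inverse $e^{-\iota_\pi}$. Thus $(A_M^{\bullet,\bullet}, \partial, [-,-]_{\partial_\pi})$ and $(A_M^{\bullet,\bullet}, d, [-,-]_{\partial_\pi})$ are formal and quasi-isomorphic to the abelian Lie algebras $H^\bullet(A_M^{\bullet,\bullet}, \partial + \partial_\pi)$ and $H^\bullet(A_M^{\bullet,\bullet}, d + \partial_\pi)$ respectively.

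The main obstacle, and the point requiring genuine care rather than bookkeeping, is verifying that $e^{\iota_\pi}$ genuinely kills the bracket $[-,-]_{\partial_\pi}$ — i.e. that the Gerstenhaber-algebra structure is trivialized, not merely that the differential is conjugated correctly. One must be careful that $\iota_\pi$ is a second-order (not first-order) operator with respect to the wedge product, so $e^{\iota_\pi}$ is not an algebra homomorphism for $\wedge$; the trivialization of the bracket has to be extracted from the BV formula (\ref{Lie str on forms1}) and the identity $\partial_\pi = e^{-\iota_\pi}(\partial+\partial_\pi)e^{\iota_\pi} - \partial$ rearranged appropriately, invoking \cite[Corollary 2]{ST08}. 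I would state this step as a short lemma and cite \cite{ST08,FM12} for the detailed verification rather than reproduce it.
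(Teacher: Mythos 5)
Your use of Lemma \ref{f1} is fine as far as the differentials go: $e^{\iota_\pi}$ does intertwine $\partial$ with $\partial+\partial_\pi$, and since $\iota_\pi$ commutes with $\bar{\partial}$ (Lemma \ref{identities}) it also intertwines $d$ with $d+\partial_\pi$. The gap is your central claim that $e^{\iota_\pi}$ carries $(A_M^{\bullet,\bullet},\partial,[-,-]_{\partial_\pi})$ isomorphically onto the \emph{abelian} DGLA $(A_M^{\bullet,\bullet},\partial+\partial_\pi,0)$. No such strict isomorphism can exist: $e^{\iota_\pi}$ is a linear bijection, so compatibility with the zero bracket on the target would force $e^{\iota_\pi}[\alpha,\beta]_{\partial_\pi}=0$ and hence $[\alpha,\beta]_{\partial_\pi}=0$ for all $\alpha,\beta$, contradicting \eqref{Lie str on forms2} (the bracket is already nonzero on $1$--forms when $\pi\neq 0$). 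Your fallback justification --- that the bracket ``becomes zero after the change of differential because it is the obstruction to $\partial_\pi$ being a derivation'' --- is not an argument: conjugating the differential by $e^{\iota_\pi}$ does not change the binary bracket at all, and the obstruction-to-derivation interpretation says nothing about its vanishing. Nor is this what \cite[Corollary 2]{ST08} or \cite[Theorem 3.2]{FM12} assert; their content is homotopy abelianness, realized by an $L_\infty$ quasi--isomorphism whose linear part is built from $e^{\iota_\pi}$ (or by a zigzag), not by a strict DGLA isomorphism onto an abelian one, so citing them cannot repair the step as you phrased it.

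What your write-up is missing is the computational input that actually drives the proof and is the substance of the paper's argument: if $\alpha,\beta$ are $\partial$--closed, then $\partial_\pi\alpha=-\partial\iota_\pi\alpha$ and $\partial_\pi(\alpha\wedge\beta)=-\partial\iota_\pi(\alpha\wedge\beta)$, so formula \eqref{Lie str on forms1} exhibits $[\alpha,\beta]_{\partial_\pi}$ as $\partial$--exact; for the de Rham case one notes $\bar{\partial}_\pi=\iota_\pi\bar{\partial}-\bar{\partial}\iota_\pi=0$ by holomorphicity of $\pi$, so $[\alpha,\beta]_{\partial_\pi}=[\alpha,\beta]_{d_\pi}$, which by \cite[Lemma 5]{ST08} is $d$--exact whenever $\alpha,\beta$ are $d$--closed, while Lemma \ref{f1} gives $e^{\iota_\pi}d=(d+\partial_\pi)e^{\iota_\pi}$. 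It is the combination ``bracket of closed elements is exact'' together with the conjugation identity that feeds the formality machinery of \cite{ST08,FM12} and yields quasi--isomorphism to the abelian Lie algebras $(H^{\bullet,\bullet}_{\partial}(M),0)$ and $(H^{\bullet}_{dR}(M),0)$. If you wish to defer details to the literature, the lemma to isolate and cite is this exactness statement plus the $L_\infty$ formality criterion, not a strict trivialization of the bracket by $e^{\iota_\pi}$, which is false.
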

\begin{proof}
For any $\partial$--closed forms $\alpha\in A_M^{i,k-i},\ \beta\in A_M^{j,l-j}$,
\begin{eqnarray*}
[\alpha,\beta]_{\partial_{\pi}} &=&
 (-1)^k\big(\partial_{\pi}(\alpha\wedge \beta)-(\partial_{\pi}\alpha) \wedge \beta
               -(-1)^{k} \alpha\wedge (\partial_{\pi}\beta)\big)\\
     &=& \partial\big((-1)^k\iota_{\pi}(\alpha\wedge \beta)-(-1)^k(\iota_{\pi}\alpha) \wedge \beta
               - \alpha\wedge (\iota_{\pi}\beta)\big),
\end{eqnarray*}
i.e. $[\alpha,\beta]_{\partial_{\pi}}$ is $\partial$--exact.
Combined with the Lemma \eqref{f1}, we have that
$(A_M^{\bullet,\bullet},\partial,[-,-]_{\partial_\pi})$
is formal and quasi--isomorphic to the abelian Lie algebra $(H^{\bullet,\bullet}_\partial(M),0)$.

Set $d_{\pi}:=\iota_{\pi}d-d\iota_{\pi}=\partial_{\pi}+\bar{\partial}_{\pi}$.
According to a result by Sharygin--Talalaev \cite[Lemma 5]{ST08},
the Lie bracket
$$
[\alpha,\beta]_{d_{\pi}}=
(-1)^k\big(d_{\pi}(\alpha\wedge \beta)-(d_{\pi}\alpha) \wedge \beta
               -(-1)^{k} \alpha\wedge (d_{\pi}\beta)\big),
               \alpha\in A_{M}^{k},\ \beta\in A_{M}^{l},
$$
associated to $d_{\pi}$ is $d$--exact
if both $\alpha$ and $\beta$ are $d$--closed.
Since the Poisson bi--vector field $\pi$ is  holomorphic,
one have that
the Dolbeault operator $\bar{\partial}$ commutes with the operator $\iota_{\pi}$.
Equivalently, we get $\bar{\partial}_{\pi}=\iota_{\pi}\bar{\partial}-\bar{\partial}\iota_{\pi}=0$.
Therefore we in fact have that $[\alpha,\beta]_{\partial_{\pi}}=[\alpha,\beta]_{d_{\pi}}$ is $d$--exact,
and the equation \eqref{f2} becomes as
\begin{eqnarray*}
 e^{\iota_{\pi}}d&=& e^{\iota_{\pi}}(\partial+\bar{\partial}) \\
 &=& e^{\iota_{\pi}}\partial+e^{\iota_{\pi}}\bar{\partial} \\
   &=& (\partial+\partial_\pi)e^{\iota_{\pi}} +\bar{\partial}e^{\iota_{\pi}}\\
   &=& (d+\partial_\pi)e^{\iota_{\pi}}.
\end{eqnarray*}
Therefore we conclude that
$(A_M^{\bullet,\bullet},d,[-,-]_{\partial_\pi})$
is formal
and quasi--isomorphic to the abelian Lie algebra $(H^{\bullet}_{dR}(M),0)$.
\end{proof}

We are now in a position to give the proof of Theorem \ref{main-theorem2}.
\begin{proof}[Proof of Theorem \ref{main-theorem2}]
Under the assumption that $(M,\pi)$ satisfies the $\partial_{\pi}\bar{\partial}$--lemma,
we claim the morphisms of DGLAs in the diagram
  \begin{equation*}\label{formality}
 \begin{tikzpicture}[scale=0.3]
\draw (-10,0) node[left] {$(A_{M}^{\bullet,\bullet},\bar{\partial},[-,-]_{\partial_\pi})$};
   \draw[-latex] (-8,0)-- (-10,0) node[right] {\quad\, $(\ker\partial_{\pi},\bar{\partial},[-,-]_{\partial_\pi})$};
   \draw(6.5,0) node[right] {$(\ker\bar{\partial},\partial_{\pi},[-,-]_{\partial_\pi})$};
   \draw[-latex] (18,0)-- (20,0) node[right] {$(A_{M}^{\bullet,\bullet},\partial_{\pi},[-,-]_{\partial_\pi}) $};
\draw  (3.5,-5) node[left] {$(H_{\partial_{\pi}}^{\bullet,\bullet}(M),0,[-,-]_{\partial_\pi}) $};
\draw(6.5,-5) node[right] {$(H_{\bar{\partial}}^{\bullet,\bullet}(M),0,[-,-]_{\partial_\pi}) $};
\draw  (12.5,-10) node[left] {$(H_{BC}^{\bullet,\bullet}(M,\pi),0,[-,-]_{\partial_\pi})$};
 \draw[-latex] (-2.8,-0.5)-- (-2.8,-4);
 \draw[-latex] (3,-8.5)--(-3,-6);
  \draw[-latex] (13,-1)-- (13,-4);
 \draw[-latex] (7,-8.5)--(13,-6);
      \draw  (-2.5,-2) node[left] {$p_1$};
            \draw  (13,-2) node[left] {$p_2$};
                  \draw  (-8,1) node[left] {$i_1$};
            \draw  (20,1) node[left] {$i_2$};
          \end{tikzpicture}
\end{equation*}
are all well--defined and quasi--isomorphic.
In fact, by the symmetry of the two operators $\partial_{\pi}$ and $\bar{\partial}$
in $\partial_{\pi}\bar{\partial}$--lemma
and the Theorem \ref{lem1},
we only need to check the morphisms $i_1$ and $p_1$ are well--defined and quasi--isomorphic.

Indeed,
by the facts that $\partial_{\pi}$ is commutative with $\bar{\partial}$
and both $\partial_{\pi}, \bar{\partial}$ are derivations of the bracket $[-,-]_{\partial_\pi}$,
we obtain that both $i_1$ and $p_1$ are well--defined.
Following \cite[Proposition 9.7.1]{Man99},
we prove that both $i_1$ and $p_1$ are quasi--isomorphisms.

If $\partial_{\pi}(\alpha)=0, i_1(\alpha)=\alpha=\bar{\partial} (\beta)$ for some $\beta$,
then $\alpha\in \ker\partial_{\pi}\cap \mathrm{im}\bar{\partial}$.
By the $\partial_{\pi}\bar{\partial}$--lemma,
there exists a $\gamma$ such that $\alpha=\bar{\partial}\partial_{\pi}(\gamma)$.
This means that for any element $\alpha$ of $\ker\partial_{\pi}$,
if its image $i_1(\alpha)$ is $\bar{\partial}$--exact,
then $\alpha$ itself is $\bar{\partial}$--exact in $\ker\partial_{\pi}$.
By definition, we conclude that $H(i_1)$ is injective.

Further,
if $\bar{\partial} (\alpha)=0$,
then $\partial_{\pi}(\alpha)\in \ker\bar{\partial} \cap \mathrm{im}\partial_{\pi}$.
By the $\partial_{\pi}\bar{\partial}$--lemma,
there exists a $\beta$ such that $\partial_{\pi}(\alpha)=\partial_{\pi}\bar{\partial}(\beta)$.
Equivalently,
$\alpha-\bar{\partial}\beta\in \ker \partial_{\pi}$
and $[\alpha-\bar{\partial}\beta]=[\alpha]$ in $H_{\bar{\partial}}^{\bullet,\bullet}(M)$.
That is to say, $H(i_1)$ is surjective.

Meanwhile, if $\alpha\in \mathrm{im}\partial_{\pi}$ and $\bar{\partial}(\alpha)=0$,
then $\alpha\in \ker\bar{\partial}\cap \mathrm{im}\partial_{\pi}$.
Once again, by the $\partial_{\pi}\bar{\partial}$--lemma,
there exists a $\gamma$ such that $\alpha=\bar{\partial}\partial_{\pi}(\gamma)$.
In other words, $H(p_1)$ is injective.

At last, the morphism $H(p_1)$ is surjective
since if $\partial_{\pi}(\alpha)=0$,
then $\bar{\partial}\alpha\in \ker\partial_{\pi}\cap \mathrm{im}\bar{\partial}$.
Once again, by the $\partial_{\pi}\bar{\partial}$--lemma,
there exists a $\beta$ such that $\bar{\partial}\alpha=\bar{\partial}\partial_{\pi}(\beta)$.
Equivalently, the $\bar{\partial}$--closed element $\alpha-\partial_{\pi}(\beta)$
is cohomologous to $\alpha$ in $H_{\partial_{\pi}}^{\bullet,\bullet}(M)$.

Thus,
if $(M,\pi)$ satisfies the $\partial_{\pi}\bar{\partial}$--lemma,
then the DGLA
$(A_M^{\bullet,\bullet},\bar{\partial},[-,-]_{\partial_\pi})$
is quasi--isomorphic to $(H_{\bar{\partial}}^{\bullet,\bullet}(M),0,[-,-]_{\partial_\pi})$.
By definition, this means that $(A_M^{\bullet,\bullet},\bar{\partial},[-,-]_{\partial_\pi})$ is formal.
\end{proof}

\section{Maurer--Cartan elements}\label{c4}

In this section we
consider the Maurer--Cartan equation of the DGLA
$(A_M^{\bullet,\bullet}[[t]]=A_M^{\bullet,\bullet}\otimes \mathbb{C}[[t]],\bar{\partial},[-,-]_{\partial_\pi})$:
\begin{equation}\label{MCE1}
  \bar{\partial}\alpha_t+\frac{1}{2}[\alpha_t,\alpha_t]_{\partial_\pi}=0.
\end{equation}
Naturally, due to the degree reason, the solutions (called Maurer--Cartan elements)
of such equation (if exists) lie in $A_M^{1,1}[[t]]$.
If we write $\alpha_t=\sum\limits_{i=1}^\infty\alpha_it^i$,
then the Maurer--Cartan equation is equivalent to
a system of equations
\begin{equation}\label{MCE2}
\begin{cases}
  \bar{\partial}\alpha_1=0 ,   &   \\
  \bar{\partial}\alpha_k+\frac{1}{2}\sum\limits_{i=1}^{k-1}[\alpha_i,\alpha_{k-i}]_{\partial_\pi}=0,   & k\geq 2.
\end{cases}
\end{equation}

\begin{theorem}\label{Solution_of_MCE1}
Let $(M,\pi)$ be a holomorphic Poisson manifold.
If $(M,\pi)$ satisfies the $\partial_{}\bar{\partial}$--lemma
or $\partial_{\pi}\bar{\partial}$--lemma,
then for any $[\alpha]\in H_{\bar{\partial}}^{1,1}(M)$,
there exists a Maurer--Cartan element
$\alpha_t$
whose $\alpha_1$ is a representative of $[\alpha]$.
\end{theorem}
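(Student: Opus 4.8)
The plan is to solve the Maurer--Cartan system \eqref{MCE2} inductively, constructing the $\alpha_k$ one degree at a time, using the relevant $\partial\bar\partial$-type lemma to kill the obstruction at each stage. I treat the $\partial_\pi\bar\partial$--lemma case; the $\partial\bar\partial$--lemma case is handled in the same way after invoking Theorem \ref{deg1} (equivalently, after first replacing $\alpha_1$ by a cohomologous $\partial$-closed, hence $\partial_\pi$-closed, representative, so that the argument below applies verbatim).

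First I would set $\alpha_1$ to be a representative of $[\alpha]$; by the $\partial_\pi\bar\partial$--lemma applied as in the proof of Theorem \ref{main-theorem2} (surjectivity of $H(i_1)$), I may further assume $\partial_\pi\alpha_1 = 0$, so $\alpha_1 \in \ker\bar\partial \cap \ker\partial_\pi$. The induction hypothesis at stage $k$ is that $\alpha_1,\dots,\alpha_{k-1}$ have been constructed with $\bar\partial\alpha_i + \tfrac12\sum_{j=1}^{i-1}[\alpha_j,\alpha_{i-j}]_{\partial_\pi}=0$ for all $i<k$, and moreover $\partial_\pi\alpha_i = 0$ for all $i < k$. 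Set
\[
\Theta_k := -\tfrac12\sum_{i=1}^{k-1}[\alpha_i,\alpha_{k-i}]_{\partial_\pi}\in A_M^{1,2}.
\]
I must show $\Theta_k \in \mathrm{im}\,\bar\partial$, and then choose $\alpha_k$ with $\bar\partial\alpha_k = \Theta_k$ and $\partial_\pi\alpha_k = 0$, so that the induction continues.

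The key observation is that, since each $\alpha_i$ is $\partial_\pi$-closed, formula \eqref{Lie str on forms1} gives $[\alpha_i,\alpha_{k-i}]_{\partial_\pi} = (-1)\,\partial_\pi(\alpha_i\wedge\alpha_{k-i})$ (the two correction terms vanish because $\partial_\pi\alpha_i = \partial_\pi\alpha_{k-i} = 0$), so $\Theta_k = \partial_\pi\big(\tfrac12\sum_i \alpha_i\wedge\alpha_{k-i}\big)$; in particular $\Theta_k \in \mathrm{im}\,\partial_\pi$ and $\partial_\pi\Theta_k = 0$ by $\partial_\pi^2=0$. Next I check $\bar\partial\Theta_k = 0$: because $\bar\partial$ is a derivation of $[-,-]_{\partial_\pi}$ (Lemma \ref{DGL}) and anticommutes with $\partial_\pi$ (Lemma \ref{identities}), one expands $\bar\partial\Theta_k$ into a sum of brackets $[\bar\partial\alpha_i,\alpha_{k-i}]_{\partial_\pi}$; substituting the stage-$i$ and stage-$(k-i)$ equations $\bar\partial\alpha_i = -\tfrac12\sum[\alpha_j,\alpha_{i-j}]_{\partial_\pi}$ turns this into a triple sum of iterated brackets, which vanishes by the graded Jacobi identity for $[-,-]_{\partial_\pi}$ — this is the standard "the obstruction is closed" computation for Maurer--Cartan deformations, so I would state it and refer to the Jacobi identity rather than writing out the signs. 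Thus $\Theta_k \in \ker\bar\partial \cap \mathrm{im}\,\partial_\pi$, and the $\partial_\pi\bar\partial$--lemma (part (2) of the equivalent characterization) yields $\beta_k$ with $\Theta_k = \partial_\pi\bar\partial\beta_k = \bar\partial(\partial_\pi\beta_k)$. Setting $\alpha_k := \partial_\pi\beta_k$ gives $\bar\partial\alpha_k = \Theta_k$ and $\partial_\pi\alpha_k = \partial_\pi^2\beta_k = 0$, closing the induction. The resulting $\alpha_t = \sum_{k\ge1}\alpha_k t^k$ solves \eqref{MCE1} as a formal power series, and $\alpha_1$ represents $[\alpha]$ by construction.

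The main obstacle — and the only place requiring care — is the verification that $\Theta_k$ is $\bar\partial$-closed, since that is where the induction hypothesis (all lower equations, not just $\partial_\pi$-closedness) must be fed in and where the Jacobi identity does the work; once that is in hand, the $\partial_\pi\bar\partial$--lemma does everything else, and crucially it is the lemma that lets us pick $\alpha_k$ to again be $\partial_\pi$-closed, which is what makes the bracket collapse to a single $\partial_\pi$-term at every stage. For the last sentence of the theorem, I would invoke Proposition \ref{deformation}: the map $\alpha_t \mapsto \pi^\sharp(\alpha_t)$ sends a Maurer--Cartan element of $(A_M^{\bullet,\bullet}[[t]],\bar\partial,[-,-]_{\partial_\pi})$ to a Maurer--Cartan element of the Kodaira--Spencer DGLA $(A^{0,\bullet}(M,\wedge^\bullet T_M),\bar\partial,[-,-])$, whence $\pi^\sharp[\alpha] = [\pi^\sharp\alpha_1] \in H^1(M,\mathcal{T}_M)$ is tangent to an actual deformation of the complex structure of $M$.
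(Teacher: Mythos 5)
Your treatment of the $\partial_{\pi}\bar{\partial}$--lemma case is correct and is essentially the paper's own argument: choose a $\partial_{\pi}$--closed representative $\alpha_1$, observe via the BV formula \eqref{Lie str on forms1} that the obstruction $\Theta_k$ is $\partial_{\pi}$--exact, check it is $\bar{\partial}$--closed by the standard Jacobi--identity computation (which the paper writes out explicitly, splitting into even and odd $k$, while you defer it -- acceptable, since it is the generic DGLA obstruction computation), and then apply the lemma to set $\alpha_k=\pm\partial_{\pi}\beta_k$, which keeps all terms $\partial_{\pi}$--closed. (Minor point: since $\partial_{\pi}$ and $\bar{\partial}$ anticommute, $\partial_{\pi}\bar{\partial}\beta_k=-\bar{\partial}\partial_{\pi}\beta_k$, so your equality $\Theta_k=\partial_{\pi}\bar{\partial}\beta_k=\bar{\partial}(\partial_{\pi}\beta_k)$ is off by a sign; this is absorbed by replacing $\beta_k$ with $-\beta_k$ and is not a real issue.)

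The genuine gap is in your one--sentence reduction of the $\partial_{}\bar{\partial}$--lemma case. The claim that a $\partial$--closed representative is ``hence $\partial_{\pi}$--closed'' is false: for $\partial\alpha=0$ one has $\partial_{\pi}\alpha=\iota_{\pi}\partial\alpha-\partial\iota_{\pi}\alpha=-\partial\iota_{\pi}\alpha$, which need not vanish. Moreover, even if $\alpha_1$ happened to be $\partial_{\pi}$--closed, your induction invokes the $\partial_{\pi}\bar{\partial}$--lemma at every stage (to solve $\Theta_k=\partial_{\pi}\bar{\partial}\beta_k$ from $\Theta_k\in\ker\bar{\partial}\cap\mathrm{im}\,\partial_{\pi}$), and that lemma is simply not available under the $\partial\bar{\partial}$--hypothesis; citing Theorem \ref{deg1} (degeneracy of the spectral sequence) does not supply it either. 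The correct repair, which is what the paper does, is to rerun the entire induction with $\partial$ playing the role of $\partial_{\pi}$: choose $\partial$--closed representatives, use the fact (from the proof of the formality of $(A_M^{\bullet,\bullet},\partial,[-,-]_{\partial_\pi})$, i.e.\ $\partial_{\pi}=\iota_{\pi}\partial-\partial\iota_{\pi}$) that the bracket of $\partial$--closed forms is $\partial$--exact, so that $\Theta_k\in\ker\bar{\partial}\cap\ker\partial\cap\mathrm{im}\,d$, apply the $\partial\bar{\partial}$--lemma to write $\Theta_k=\pm\partial\bar{\partial}\zeta_k$, and set $\alpha_k=\pm\partial\zeta_k$, which is again $\partial$--closed, closing that induction.
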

\begin{proof}
We here only prove the theorem
under the assumption of $\partial_{\pi}\bar{\partial}$--lemma
since the case that $(M,\pi)$ satisfies the $\partial_{}\bar{\partial}$--lemma
can be obtained with total same strategy by replacing $\partial_{\pi}$ with $\partial$.

Assume that $(M,\pi)$ satisfies the $\partial_{\pi}\bar{\partial}$--lemma.
To prove the theorem,
it is sufficient to find $\alpha_2,\alpha_3,\cdots$ such that they satisfy the
Maurer--Cartan equation \ref{MCE2}.
For any class $[\alpha]\in H^{\bullet,\bullet}_{\bar{\partial}}(M)$,
take $\beta=\partial_{\pi}\alpha$, then we have the following
$$
\left\{ \begin{array}{ll}
\bar{\partial}\beta=\bar{\partial}\partial_{\pi}\alpha=-\partial_{\pi}\bar{\partial}\alpha=0,&\\
\beta=\partial_{\pi}\alpha,  &
\end{array} \right.
$$
i.e. $\beta\in\ker\partial_{\pi}\cap \ker\, \bar{\partial}
\cap (\mathrm{im} \,\partial_{\pi}+\mathrm{im} \,\bar{\partial})$.
By the $\partial_{\pi}\bar{\partial}$--lemma of $(M,\pi)$,
there exists a $\gamma$ on $M$ such that $\beta=\partial_{\pi}\bar{\partial}\gamma$.
Let $\tilde{\alpha}=\alpha-\bar{\partial}\gamma$.
Then we have that $[\alpha]=[\tilde{\alpha}]$ in $H^{\bullet,\bullet}_{\bar{\partial}}(M)$
and $\partial_{\pi}\tilde{\alpha}=0$.
Therefore, in what follows
we always choose the $\partial_{\pi}$--closed representatives
of the Dolbeault cohomology classes in $H^{1,1}_{\bar{\partial}}(M)$.
Let $[\alpha_1]\in H_{\bar{\partial}}^{1,1}(M)$ such that
$\partial_{\pi}(\alpha_1)=0$.,
then
$$
\left\{ \begin{array}{ll}
\bar{\partial}[\alpha_1,\alpha_1]_{\partial_\pi}
=[\bar{\partial}\alpha_1,\alpha_1]_{\partial_\pi}
-[\alpha_1,\bar{\partial}\alpha_1]_{\partial_\pi}=0,&\\
{[\alpha_1,\alpha_1]_{\partial_\pi}}
=\partial_\pi(\alpha_1\wedge\alpha_1)
-\partial_\pi(\alpha_1)\wedge\alpha_1
-\alpha_1\wedge\partial_\pi(\alpha_1)=\partial_\pi(\alpha_1\wedge\alpha_1),&
\end{array} \right.
$$
This means that $[\alpha_1,\alpha_1]_{\partial_\pi}$ is $\bar{\partial}$--closed and $\partial_\pi$--exact.
By the $\partial_{\pi}\bar{\partial}$--lemma of $(M,\pi)$,
there exists a $(2,1)$--form $\zeta_2$ such that $[\alpha_1,\alpha_1]_{\partial_\pi}=\partial_\pi\bar{\partial}\zeta_2$.
Therefore if take $\beta_2=\frac{1}{2}\zeta_2,\alpha_2=\partial_\pi\beta_2$,
then we have that
$$\bar{\partial}\alpha_2+\frac{1}{2}[\alpha_1,\alpha_1]_{\partial_\pi}=0.$$
Inductively, suppose that
we already found $\partial_\pi$--exact forms
$\alpha_2=\partial_\pi\beta_2,\cdots,\alpha_k=\partial_\pi\beta_k$ satisfying the Maurer--Cartan equation \ref{MCE2}.
Let
$$
\gamma_k=[\alpha_1,\alpha_{k}]_{\partial_\pi}+\cdots+[\alpha_i,\alpha_{k+1-i}]_{\partial_\pi}
+\cdots+[\alpha_k,\alpha_{1}]_{\partial_\pi}.
$$
Note the degree of the bracket
$[-,-]_{\partial_\pi}$ is $-1$,
and the symmetry of $[-,-]_{\partial_\pi}$ indicates that
$[\alpha_i,\alpha_{j}]_{\partial_\pi}=[\alpha_j,\alpha_{i}]_{\partial_\pi}$
and $[\bar{\partial}\alpha_i,\alpha_{j}]_{\partial_\pi}=-[\alpha_j,\bar{\partial}\alpha_{i}]_{\partial_\pi}$.
When $k=2l\geq4$, we have that
\begin{eqnarray*}
  \bar{\partial}\gamma_k
  &=& 2\cdot\sum\limits_{i=1}^{l}\bar{\partial} [\alpha_i,\alpha_{2l+1-i}]_{\partial_\pi} \\
   &=&  2\cdot\sum\limits_{i=1}^{l}\big( [\bar{\partial}\alpha_i,\alpha_{2l+1-i}]_{\partial_\pi}
                                  +[\bar{\partial}\alpha_{2l+1-i},\alpha_i]_{\partial_\pi}\big)\\
   &=&\sum\limits_{s+t=i}\sum\limits_{i=2}^{l}
                   [[\alpha_s,\alpha_{t}]_{\partial_\pi},\alpha_{2l+1-i}]_{\partial_\pi}
       +\sum\limits_{p+q=2l+1-i}\sum\limits_{i=1}^{l}
                          [[\alpha_p,\alpha_{q}]_{\partial_\pi},\alpha_i]_{\partial_\pi}\\
   &=& \sum\limits_{p+q+r=2l+1}[[\alpha_p,\alpha_{q}]_{\partial_\pi},\alpha_r]_{\partial_\pi}\\                             &=& 0.
\end{eqnarray*}
Analogous, when $k=2l-1\geq3$, we have that
\begin{eqnarray*}
  \bar{\partial}\gamma_k &=& 2\cdot\sum\limits_{i=1}^{l-1}\bar{\partial} [\alpha_i,\alpha_{2l-i}]_{\partial_\pi}
  +\bar{\partial} [\alpha_l,\alpha_{l}]_{\partial_\pi}\\
   &=&  2\cdot\sum\limits_{i=1}^{l-1}\big( [\bar{\partial}\alpha_i,\alpha_{2l-i}]_{\partial_\pi}
            +[\bar{\partial}\alpha_{2l-i},\alpha_i]_{\partial_\pi} \big)
               + 2\cdot[\bar{\partial}\alpha_l,\alpha_{l}]_{\partial_\pi}\\
 &=&\sum\limits_{s+t=i}\sum\limits_{i=2}^{l-1}
             [[\alpha_s,\alpha_{t}]_{\partial_\pi},\alpha_{2l-i}]_{\partial_\pi}
+\sum\limits_{p+q=2l-i}\sum\limits_{i=1}^{l-1}[[\alpha_p,\alpha_{q}]_{\partial_\pi},\alpha_i]_{\partial_\pi}
+\sum\limits_{u+v=l}[[\alpha_u,\alpha_{v}]_{\partial_\pi},\alpha_l]_{\partial_\pi}\\
&=&
\sum\limits_{p+q+r=2l}[[\alpha_p,\alpha_{q}]_{\partial_\pi},\alpha_r]_{\partial_\pi}\\                             &=& 0.
\end{eqnarray*}
Hence we always have that $\gamma_k$ is $\bar{\partial}$--closed.
Moreover,
$$
\gamma_k
=[\alpha_1,\partial_\pi\beta_{k}]_{\partial_\pi}+
\sum\limits_{i=2}^{k} [\partial_\pi\beta_i,\alpha_{k+1-i}]_{\partial_\pi}
=\partial_\pi\big(-[\alpha_1,\beta_{k}]_{\partial_\pi}+\sum\limits_{i=2}^{k} [\beta_i,\alpha_{k+1-i}]_{\partial_\pi}\big),
$$
This means that $\gamma$ is also $\partial_\pi$--exact.
By the $\partial_{\pi}\bar{\partial}$--lemma of $(M,\pi)$,
there exists a $(2,1)$--form $\zeta_{k+1}$ such that $\gamma=\partial_\pi\bar{\partial}\zeta_{k+1}$.
Therefore if take $\beta_{k+1}=\frac{1}{2}\zeta_{k+1},\alpha_{k+1}=\partial_\pi\beta_{k+1}$,
then we have that
$$\bar{\partial}\alpha_{k+1}+\frac{1}{2}\sum\limits_{i=1}^{k}[\alpha_1,\alpha_1]_{\partial_\pi}=0.$$
So the theorem is proved.
\end{proof}

Recall for a holomorphic Poisson manifold $(M, \pi)$,
the holomorphic Poisson bi--vector field $\pi$ induces
a sheaf morphism
$\pi^\sharp:\Omega_{M}^{1}\to \mathcal{T}_{M}$ by contraction
with $\pi$.
More generally, for any $p\geq1$, $\pi$ induces
a sheaf morphism
$
\pi^\sharp:\Omega_{M}^{p}\to\wedge^p\mathcal{T}_{M}
$
and
$$
\pi^\sharp:\A_{M}^{p,q}\to\A^{0,q}(M,\wedge^p\mathcal{T}_{M})
$$
which is given locally by
$$
\pi^\sharp:dz_{i_1}\wedge\cdots\wedge dz_{i_p}\wedge d\hat{z}_{i_1}\wedge\cdots\wedge d\hat{z}_{i_q}
\mapsto (-1)^pX_{z_{i_1}}\wedge\cdots\wedge X_{z_{i_p}}\otimes d\hat{z}_{i_1}\wedge\cdots\wedge d\hat{z}_{i_q}.
$$
Here $X_f$ is the Hamiltonian vector field with respect to $f$.
Such $\pi^\sharp$ connects the DGLA $(A_{M}^{\bullet,\bullet},\bar{\partial},[-,-]_{\partial_\pi})$
with the Kodaira--Spencer DGLA $(A^{0,\bullet}(M,\wedge^\bullet T_{M}),\bar{\partial},[-,-]_{})$.
More precisely, we have
\begin{proposition}\label{deformation}
Let $(M, \pi)$ be a holomorphic Poisson manifold,
then
the holomorphic Poisson bi--vector field $\pi$ induces
a map of DGLAs
$$
\pi^\sharp:(A_{M}^{\bullet,\bullet},\bar{\partial},[-,-]_{\partial_\pi})
\to (A^{0,\bullet}(M,\wedge^\bullet T_{M}),\bar{\partial},[-,-]_{}).
$$
\end{proposition}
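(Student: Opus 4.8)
The plan is to verify the two conditions defining a morphism of DGLAs: that $\pi^\sharp$ is a chain map for the Dolbeault operator $\bar{\partial}$, and that it intertwines the Koszul bracket $[-,-]_{\partial_\pi}$ on $A_{M}^{\bullet,\bullet}$ with the $\A_{M}^{0,\bullet}$--linear Schouten bracket $[-,-]$ on $A^{0,\bullet}(M,\wedge^{\bullet}T_{M})$.

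First I would dispose of compatibility with $\bar{\partial}$, which I expect to be routine. The sheaf map $\pi^\sharp\colon\Omega_{M}^{p}\to\wedge^{p}\mathcal{T}_{M}$ is $\CO_{M}$--linear, being contraction with the holomorphic tensor $\pi$; under the identifications $\A_{M}^{p,q}\cong\Omega_{M}^{p}\otimes_{\CO_{M}}\A_{M}^{0,q}$ and $\A^{0,q}(M,\wedge^{p}\mathcal{T}_{M})\cong\wedge^{p}\mathcal{T}_{M}\otimes_{\CO_{M}}\A_{M}^{0,q}$, the operator $\bar{\partial}$ acts on both sides only on the $\A_{M}^{0,q}$--factor, whereas $\pi^\sharp$ acts on the holomorphic-form factor alone, so the two commute. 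Concretely, in local holomorphic coordinates $\pi=\sum_{i<j}\pi^{ij}\,\partial_{z_i}\wedge\partial_{z_j}$ with $\pi^{ij}$ holomorphic, so each $\pi^\sharp(dz_{i})$ is a holomorphic, hence $\bar{\partial}$--closed, vector field; writing a general form as $\sum f_{IJ}\,dz_{I}\wedge d\bar{z}_{J}$ one reads off $\pi^\sharp(\bar{\partial}\alpha)=\bar{\partial}(\pi^\sharp\alpha)$ at once.

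For the bracket, the strategy is to reduce to a finite check on local algebra generators. I would first note that both $[-,-]_{\partial_\pi}$ and the Schouten bracket are Gerstenhaber brackets, i.e.\ biderivations of degree $-1$ with respect to the respective wedge products, and that the sign $(-1)^{p}$ built into the local description of $\pi^\sharp$ makes it a morphism of graded--commutative algebras, $\pi^\sharp(\alpha\wedge\beta)=\pi^\sharp\alpha\wedge\pi^\sharp\beta$. A standard propagation argument using the two Leibniz rules then reduces the identity $\pi^\sharp[\alpha,\beta]_{\partial_\pi}=[\pi^\sharp\alpha,\pi^\sharp\beta]_{SN}$ to the case where $\alpha,\beta$ run over a local set of algebra generators of $\A_{M}^{\bullet,\bullet}$, namely the functions $z_{i}$, the $dz_{i}$ and the $d\bar{z}_{j}$. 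Any pair involving a $d\bar{z}_{j}$ is immediate: since $\partial_{\pi}(d\bar{z}_{j})=0$ and $\iota_{\pi}$ passes through $d\bar{z}_{j}$, formula \eqref{Lie str on forms1} shows that $[\alpha,d\bar{z}_{j}]_{\partial_\pi}$ vanishes for all $\alpha$, and $d\bar{z}_{j}$ is equally central for the $\A_{M}^{0,\bullet}$--linear Schouten bracket, so both sides are zero. A pair containing a function reduces through \eqref{Lie str on forms2} to the action of a Hamiltonian vector field, which $\pi^\sharp$ transports by construction.

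The one substantive case is $[dz_{i},dz_{j}]_{\partial_\pi}$. Writing $\{f,g\}:=\pi(df,dg)$, formula \eqref{Lie str on forms2} gives $[dz_{i},dz_{j}]_{\partial_\pi}=\partial\{z_{i},z_{j}\}$, so $\pi^\sharp[dz_{i},dz_{j}]_{\partial_\pi}=\pi^\sharp(\partial\{z_{i},z_{j}\})$; on the other hand $[\pi^\sharp(dz_{i}),\pi^\sharp(dz_{j})]_{SN}$ is the Schouten bracket of the vector fields $\{z_{i},\cdot\}$ and $\{z_{j},\cdot\}$, and the equality of these two is precisely the Jacobi identity $\{z_{i},\{z_{j},\cdot\}\}-\{z_{j},\{z_{i},\cdot\}\}=\{\{z_{i},z_{j}\},\cdot\}$ for the Poisson bracket, i.e.\ the unwinding of $[\pi,\pi]_{SN}=0$. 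This is the only place where the Poisson condition is used beyond $\bar{\partial}\pi=0$, and I expect the only genuine care to be needed in tracking the $(-1)^{p}$ sign conventions through this degree--one identity; there is no analytic or cohomological obstacle. Alternatively, one may package the bracket statement as Koszul's theorem that $\pi^\sharp\colon(\Omega_{M}^{\bullet},\wedge,[-,-]_{\partial_\pi})\to(\wedge^{\bullet}\mathcal{T}_{M},\wedge,[-,-]_{SN})$ is a morphism of Gerstenhaber algebras, and then extend it to Dolbeault forms by tensoring with the fine resolution $\A_{M}^{0,\bullet}$ as in Lemma \ref{KB-equal1}.
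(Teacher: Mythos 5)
Your proposal is correct and follows essentially the same route as the paper: both establish that $\pi^\sharp$ is an algebra morphism commuting with $\bar{\partial}$ (by holomorphy of $\pi$), and then verify the bracket identity on $1$--forms (the paper simply asserts $\pi^\sharp[\alpha,\beta]_{\partial_\pi}=[\pi^\sharp\alpha,\pi^\sharp\beta]$ for $1$--forms and invokes the Gerstenhaber/biderivation structure to conclude). Your write--up merely makes explicit the generator check, the vanishing for the $d\bar{z}_j$ factors, and the reduction of the key case to $[\pi,\pi]_{SN}=0$, which the paper leaves implicit.
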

\begin{proof}
By definition, we have that
$\pi^\sharp$ is homomorphism of algebras
$A_{M}^{\bullet,\bullet}$
and $A^{0,\bullet}(M,\wedge^\bullet T_{M})$.
Since $\pi$ is holomorphic,
one can obtain that $\pi^\sharp$ is commutative with $\bar{\partial}$.
At last, note these two DGLAs are Gerstenhaber algebras,
and for any $1$--forms $\alpha,\beta$,
$$\pi^\sharp[\alpha,\beta]_{\partial_\pi}
=[\pi^\sharp(\alpha),\pi^\sharp(\beta)]_{},
$$
thus the map $\pi^\sharp$ is a homomorphism of Lie algebras
and the proposition is proved.
\end{proof}

A classical result of Maurer--Cartan equation
is that a map of DGLAs induces a map of Maurer--Cartan elements.
By this fact, based on the Proposition \ref{deformation},
a corollary of the Theorem \ref{Solution_of_MCE1} states:
\begin{corollary}\label{def}
If a holomorphic Poisson manifold $(X,\pi)$ satisfies the $\partial_{}\bar{\partial}$--lemma
or $\partial_{\pi}\bar{\partial}$--lemma,
then for any $[\alpha]\in H_{\bar{\partial}}^{1,1}(M)$,
$\pi^\sharp[\alpha]\in H^1(M,\mathcal{T}_{M})$
is tangent to a deformation of complex structure.
\end{corollary}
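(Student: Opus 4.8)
The plan is to deduce Corollary \ref{def} directly from Theorem \ref{Solution_of_MCE1} and Proposition \ref{deformation}, together with the standard functoriality of the Maurer--Cartan equation under morphisms of DGLAs. First I would recall that the Kodaira--Spencer DGLA $(A^{0,\bullet}(M,\wedge^\bullet T_{M}),\bar{\partial},[-,-])$ controls deformations of the complex structure of $M$: a Maurer--Cartan element $\varphi_t=\sum_{i\geq 1}\varphi_i t^i\in A^{0,1}(M,T_M)[[t]]$ solving $\bar{\partial}\varphi_t+\tfrac12[\varphi_t,\varphi_t]=0$ is precisely a formal family of deformations of the complex structure, and its first-order term $[\varphi_1]\in H^1(M,\mathcal{T}_M)$ is by definition the Kodaira--Spencer class of that deformation, i.e.\ the tangent vector to the deformation.

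Next I would invoke Theorem \ref{Solution_of_MCE1}: under the $\partial\bar{\partial}$--lemma or $\partial_\pi\bar{\partial}$--lemma, given $[\alpha]\in H^{1,1}_{\bar{\partial}}(M)$ we obtain a Maurer--Cartan element $\alpha_t=\sum_{i\geq 1}\alpha_i t^i$ of $(A_M^{\bullet,\bullet}[[t]],\bar{\partial},[-,-]_{\partial_\pi})$ with $[\alpha_1]=[\alpha]$. Then I would apply $\pi^\sharp$. By Proposition \ref{deformation}, $\pi^\sharp$ is a morphism of DGLAs $(A_M^{\bullet,\bullet},\bar{\partial},[-,-]_{\partial_\pi})\to (A^{0,\bullet}(M,\wedge^\bullet T_M),\bar{\partial},[-,-])$, and extending $\mathbb{C}[[t]]$--linearly it is a morphism of the $t$--adically completed DGLAs. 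Since a morphism of DGLAs sends Maurer--Cartan elements to Maurer--Cartan elements — because it commutes with the differential and with the bracket, so it carries $\bar{\partial}\alpha_t+\tfrac12[\alpha_t,\alpha_t]_{\partial_\pi}=0$ to $\bar{\partial}\pi^\sharp(\alpha_t)+\tfrac12[\pi^\sharp(\alpha_t),\pi^\sharp(\alpha_t)]=0$ — the element $\varphi_t:=\pi^\sharp(\alpha_t)=\sum_{i\geq 1}\pi^\sharp(\alpha_i)t^i$ is a Maurer--Cartan element of the Kodaira--Spencer DGLA. Its leading term is $\pi^\sharp(\alpha_1)\in A^{0,1}(M,T_M)$, which is $\bar{\partial}$--closed (as $\alpha_1$ is) and represents $\pi^\sharp[\alpha]\in H^1(M,\mathcal{T}_M)$ because $\pi^\sharp$ commutes with $\bar{\partial}$ and hence descends to cohomology.

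Finally I would conclude: the Maurer--Cartan element $\varphi_t$ defines a (formal) one-parameter family of deformations of the complex structure of $M$, and by construction its Kodaira--Spencer class at first order is $[\varphi_1]=\pi^\sharp[\alpha]$; thus $\pi^\sharp[\alpha]$ is tangent to a deformation of complex structure, as claimed. I do not anticipate a serious obstacle here — the content is all in Theorem \ref{Solution_of_MCE1} and Proposition \ref{deformation}, and this corollary is essentially a formal consequence. The only point requiring a word of care is the passage from a formal power series Maurer--Cartan element to an honest (germ of a) deformation; but this is exactly the content of the classical statement that the Kodaira--Spencer DGLA governs deformations, so it suffices to cite that and note that $\pi^\sharp$ respects the gradings and differentials so that the first-order datum is unchanged.
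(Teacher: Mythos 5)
Your argument is correct and is essentially the same as the paper's: the corollary is obtained by pushing the Maurer--Cartan element of Theorem \ref{Solution_of_MCE1} forward along the DGLA morphism $\pi^\sharp$ of Proposition \ref{deformation}, using the standard fact that morphisms of DGLAs preserve Maurer--Cartan elements, so that the resulting element of the Kodaira--Spencer DGLA has first-order term representing $\pi^\sharp[\alpha]$. No discrepancies to report.
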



\section{Examples}\label{c6}


In this section, we discuss some examples
with different properties in the viewpoints of Poisson geometry.
More precisely we consider some special nilmanifolds and solvmanifolds.

Let $G$ be a complex nilpotent Lie group with Lie algebra $\mathfrak{g}$
whose complexification is $\mathfrak{g}_{\mathbb{C}}:=\mathfrak{g}\mathfrak{}\otimes_{\mathbb{R}}\mathbb{C}$,
and let $H$ be a co--compact discrete subgroup of $G$.
Suppose $M=G/H$ is the associated nilmanifold endowed with a $G$--left--invariant
complex structure $J$ and an $G$--left--invariant holomorphic Poisson bi--vector field $\pi$.
Then for any $p$, there exists a natural inclusion of complexes
$$
i: (\wedge^{p,\bullet}\mathfrak{g}_{\mathbb{C}}^*,\bar{\partial})
\hookrightarrow
(\Gamma(M,\A_M^{p,\bullet}),\bar{\partial}).
$$

\begin{lemma}[{\cite[Lemma 6.1]{CCYY22}}]\label{nil}
If the map $i$ defined above is a quasi--isomorphism, then the $k$--th total cohomology of double complex $(\wedge^{\bullet,\bullet}\mathfrak{g}_{\mathbb{C}}^*,\partial_\pi,\bar{\partial})$
is isomorphic to $H_k(M,\pi)$ for any $k$.
\end{lemma}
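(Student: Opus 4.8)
The plan is to reduce the statement to the classical comparison theorem between Lie-algebra (Chevalley--Eilenberg type) cohomology and the cohomology of left-invariant forms on a nilmanifold, but now carried out at the level of the full double complex $(\Gamma(M,\A_M^{\bullet,\bullet}),\partial_\pi,\bar\partial)$ rather than just $(\Gamma(M,\A_M^{\bullet,\bullet}),\bar\partial)$. First I would set up the two double complexes in question: the ``small'' one $(\wedge^{\bullet,\bullet}\mathfrak{g}_{\mathbb{C}}^*,\partial_\pi,\bar\partial)$, which makes sense because both $J$ and $\pi$ are $G$-left-invariant, so $\partial_\pi=\iota_\pi\partial-\partial\iota_\pi$ preserves the subspace of left-invariant forms; and the ``big'' one $(\Gamma(M,\A_M^{\bullet,\bullet}),\partial_\pi,\bar\partial)$ computing $H_\bullet(M,\pi)$ by Lemma~\ref{KB-equal1}. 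The inclusion $i$ is a morphism of double complexes, since it visibly commutes with $\bar\partial$ (that is the hypothesis) and with $\partial_\pi$ (this is immediate from left-invariance of $\pi$ and naturality of $\iota_\pi,\partial$).

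The key step is a spectral-sequence comparison. I would filter both total complexes by the $\partial_\pi$-degree (equivalently by holomorphic form degree $p$), so that the $E_1$-page of the associated spectral sequence is the $\bar\partial$-cohomology of each column. By hypothesis, $i$ induces an isomorphism on $\bar\partial$-cohomology in each fixed holomorphic degree $p$, i.e. an isomorphism on $E_1$-pages. Since $i$ is a morphism of filtered complexes inducing an isomorphism on $E_1$, the Eilenberg--Moore / mapping-theorem for spectral sequences gives an isomorphism on $E_\infty$ and hence on the abutments, i.e. on the total cohomologies, provided the filtrations are bounded --- which they are, since the holomorphic form degree ranges over $0,\dots,n$. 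This yields $H^k_{tot}(\wedge^{\bullet,\bullet}\mathfrak{g}_{\mathbb{C}}^*,\partial_\pi,\bar\partial)\cong H^k_{tot}(\Gamma(M,\A_M^{\bullet,\bullet}),\partial_\pi,\bar\partial)\cong H_k(M,\pi)$, which is exactly the claim.

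I expect the main obstacle to be purely bookkeeping: one must make sure the filtration by holomorphic degree is genuinely a filtration of \emph{complexes} (i.e. that $\partial_\pi+\bar\partial$ interacts with it as the differential of a filtered complex --- here $\bar\partial$ preserves $p$ and $\partial_\pi$ lowers it by one, so the filtration $F^s=\bigoplus_{p\ge s}\A_M^{p,\bullet}$ is decreasing and respected by the total differential), and that the convergence of both spectral sequences is unproblematic because of boundedness. A secondary point to address carefully is that $H^k$ in the statement means the $k$-th \emph{total} cohomology with the grading convention matching Definition~\ref{Dol-Poi-spectral-seq} (the Koszul--Brylinski complex is homologically graded, so the total degree is $q-p$ shifted by $n$); once the indexing is aligned, the isomorphism is the one induced by $i$. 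No genuinely hard analysis is needed here --- the content is entirely in the hypothesis that $i$ is a $\bar\partial$-quasi-isomorphism, together with the standard fact that such a map upgrades to a quasi-isomorphism of the deformed (by $\partial_\pi$) total complexes.
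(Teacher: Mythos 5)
Your overall strategy is the right one, and it is the standard argument behind the cited \cite[Lemma 6.1]{CCYY22} (the present paper does not reproduce a proof, it only quotes the result): use Lemma \ref{KB-equal1} to identify $H_\bullet(M,\pi)$ with the total cohomology of $(\Gamma(M,\A_M^{\bullet,\bullet}),\partial_\pi,\bar\partial)$, observe that left-invariance of $J$ and $\pi$ makes $(\wedge^{\bullet,\bullet}\mathfrak{g}_{\mathbb{C}}^*,\partial_\pi,\bar\partial)$ a sub-double-complex and $i$ a morphism of double complexes, and then upgrade the columnwise $\bar\partial$-quasi-isomorphism to a quasi-isomorphism of total complexes by a bounded spectral-sequence comparison.

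There is one concrete misstep in the bookkeeping you yourself flagged as the only delicate point: the filtration you write down, $F^s=\bigoplus_{p\ge s}\A_M^{p,\bullet}$, is \emph{not} respected by the total differential. Since $\partial_\pi=\iota_\pi\partial-\partial\iota_\pi$ has bidegree $(-1,0)$, it maps $\A_M^{s,\bullet}\subset F^s$ into $\A_M^{s-1,\bullet}$, which lies outside $F^s$; so with this filtration the spectral sequence is not even defined. The correct choice is the opposite one: either the increasing filtration $F_s=\bigoplus_{p\le s}\A_M^{p,\bullet}$, or equivalently reindex $p\mapsto n-p$ (exactly as in Definition \ref{Dol-Poi-spectral-seq}, where $E_1^{s,t}=H_{\bar\partial}^{n-s,t}(M)$) and then take the usual decreasing column filtration; in either case $\partial_\pi$ preserves the filtration, $\bar\partial$ is the $E_0$-differential, and $E_1$ is the columnwise $\bar\partial$-cohomology, on which $i$ is an isomorphism by hypothesis. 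The filtration is bounded because $0\le p\le n$ and the fibre dimension is finite, so both spectral sequences converge and the comparison theorem gives the isomorphism of total cohomologies, whence the lemma. Alternatively, you can sidestep the filtration-direction issue entirely by noting that the mapping cone of $i$ is a bounded double complex with $\bar\partial$-exact columns, hence has exact total complex; this is the same content in a form that leaves no room for the indexing slip. With that single correction your argument is complete.
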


\begin{remark}
A result of Sakane (see \cite[Theorem 1]{Sak76}) states that
if a nilmanifold is complex parallelizable (i.e. the holomorphic tangent bundle is
holomorphically trivial),
the inclusion $i$ is a quasi--isomorphism.
For more conditions such that the inclusion $i$ is a quasi--isomorphism,
the reader may refer to Angella \cite[Theorem 3.2]{Angella}.
\end{remark}
\subsection{Iwasawa manifold}

Let $\mathrm{H}(3; \mathbb{C})$ be the complex Heisenberg Lie group:
$$
\mathrm{H}(3; \mathbb{C})=
\Bigg\{ \small{\left(\begin{array}{ccc}
1 & z_{1} & z_{2}\\
0 & 1  & z_{3} \\
0 & 0 & 1
\end{array}
\right)}\mid z_{i}\in \mathbb{C} \Bigg\}\subset \mathrm{GL}(3; \mathbb{C}).
$$
As complex manifolds, $\mathrm{H}(3; \mathbb{C})$ is isomorphic to $\mathbb{C}^{3}$.
Consider the discrete group
$\mathrm{G}_3:=\mathrm{GL}(3; \mathbb{Z}[i])\cap \mathrm{H}(3; \mathbb{C})$,
where $\mathbb{Z}[i]=\{a+bi\mid a,b\in \mathbb{Z}\}$ is the Gaussian integers.
The left multiplication gives a natural $\mathrm{G_3}$--action on $\mathrm{H}(3; \mathbb{C})$,
and a correspondent faithful $\mathrm{G}_3$--action on $\mathbb{C}^{3}$ given by
$
(a_{1}, a_{2}, a_{3})\cdot(z_{1}, z_{2}, z_{3}):=(z_{1}+a_{1}, z_{2}+a_{1}z_{3}+a_{2}, z_{3}+a_{3}),
$
where $a_{1}, a_{2}, a_{3}\in \mathbb{Z}[i]$.
Consequently, the associated $\mathrm{G}_3$--quotient space
$$
\mathbb{I}_3:=\mathbb{C}^{3}/ \mathrm{G}_3
$$
is a compact complex threefold, which is called {\it Iwasawa manifold}.

A basis of the space $(\mathfrak{g}_{\mathbb{C}}^*)^{1,0}$ of
left--invariant holomorphic differential forms of $\mathrm{H}(3; \mathbb{C})$ is
$$w_1=dz_1, w_2=dz_2-z_1dz_3, w_3=dz_3.$$
Therefore $dw_1=dw_3=0, dw_2=-w_1\wedge w_3,
\bar{\partial}(\bar{w_1})=\bar{\partial}(\bar{w_3})=0,
\bar{\partial}(\bar{w_2})=-\bar{w_1}\wedge \bar{w_3}.$
Dually, a basis of Lie algebra $\mathfrak{g}_{\mathbb{C}}^{1,0}$ of
left--invariant holomorphic vector fields of $\mathrm{H}(3; \mathbb{C})$ is
$$X_1=\frac{\partial}{\partial z_1},
X_2=\frac{\partial}{\partial z_2},
 X_3=\frac{\partial}{\partial z_3}+z_1\frac{\partial}{\partial z_2}$$
with relations $[X_1,X_2]=[X_2,X_3]=0, [X_1,X_3]=X_2$.
As a consequent, $\mathbb{I}_3$ is complex parallelizable,
hence it is a Calabi--Yau manifold.

Furthermore, a $\mathrm{H}(3; \mathbb{C})$--left--invariant holomorphic bi--vector field is in the form of
$\pi=c_1X_1\wedge X_2+c_2X_1\wedge X_3+c_3X_2\wedge X_3$.
One can check that $[\pi,\pi]=0$ if and only if $c_2=0$.
Note $\pi$ is the linear combination of two compatible Poisson bi--vector fields
  $\pi_{12}=X_1\wedge X_2$ and $\pi_{23}=X_2\wedge X_3$.
But on $\wedge^{\bullet,\bullet}\mathfrak{g}^*_{\mathbb{C}}$,
$\partial_{\pi_{12}}=\partial_{\pi_{23}}=0$ which indicates $\partial_\pi=0.$
Hence by Lemma \ref{nil},
the Dolbeault--Koszul--Brylinski spectral sequence of $(\mathbb{I}_3,\pi)$ degenerates at $E_1$--page.
However $(\mathbb{I}_3,\pi)$ neither satisfies the $\partial_{}\bar{\partial}$--lemma
nor satisfies the $\partial_{\pi}\bar{\partial}$--lemma.
Indeed, by the structure equations,
we have that
$$-w_1\wedge\bar{w_1}\wedge \bar{w_3}\notin\mathrm{im}\partial_{}\bar{\partial},$$
 while
$$
-w_1\wedge\bar{w_1}\wedge \bar{w_3}
=(\partial_{}+\bar{\partial})(w_1\wedge\bar{w_2})
\in\ker\, \partial_{}\cap \ker\, \bar{\partial}\cap \mathrm{im} \,(\partial_{}+\bar{\partial})
$$
which indicates
that $(\mathbb{I}_3,\pi)$ does not satisfy the $\partial_{}\bar{\partial}$--lemma
(see also \cite{Angella13}).
Meanwhile,
since $\mathrm{im}\partial_{\pi}\bar{\partial}=\emptyset$,
$$
-\bar{w_1}\wedge \bar{w_3}=(\partial_{\pi}+\bar{\partial})(\bar{w_2})
\in\ker\, \partial_{\pi}\cap \ker\, \bar{\partial}\cap \mathrm{im} \,(\partial_{\pi}+\bar{\partial})
\neq\emptyset.
$$
Therefore $\mathbb{I}_3$ does not satisfy the $\partial_{\pi}\bar{\partial}$--lemma.

\subsection{A six--dimensional complex nilmanifold}

Motivated by the construction of Iwasawa manifold,
we consider complex nilpotent Lie group
$$
G=\Bigg\{A= \small{\left(\begin{array}{cccc}
1 &z_1 & z_{2} & z_{3}\\
0 &1 & z_{4} & z_{5}\\
0 & 0 & 1  & z_{6} \\
0& 0 & 0 & 1
\end{array}
\right)}\mid z_{i}\in \mathbb{C} \Bigg\}\subset \mathrm{GL}(4; \mathbb{C}).
$$
As complex manifolds, $G$ is isomorphic to $\mathbb{C}^{6}$.
Consider the discrete group
$H:=\mathrm{Gl}(4; \mathbb{Z}[i])\cap G$.
Analogously, the left multiplication gives a natural $H$--action on $G$.
The associated $H$--quotient space
$
\mathbb{I}_6:=G/ H
$
is a 6--dimensional compact complex manifold.
Moreover, a basis of the space $(\mathfrak{g}_{\mathbb{C}}^*)^{1,0}$ of
left--invariant holomorphic differential forms of $G$ is
given by:
\begin{eqnarray*}
 &&w_1 =dz_1,\;\; w_2=dz_2-z_1dz_4,\;\; w_3=dz_3-z_1dz_5+(z_1z_4-z_2)dz_6,  \\
&& w_4 =dz_4,\;\; w_5=dz_5-z_4dz_6,\;\; w_6=dz_6 \qquad \qquad\qquad
\end{eqnarray*}
with structure equations
$$
\begin{cases}
   dw_1=dw_4=dw_6=0,   &  \\
   dw_2=-w_1\wedge w_4,  &  \\
    dw_3=-w_1\wedge w_5-w_2\wedge w_6,     &  \\
     dw_5=-w_4\wedge w_6 .      &
\end{cases}
$$
Dually, the Lie algebra $\mathfrak{g}_{\mathbb{C}}^{1,0}$ of left--invariant holomorphic vector fields of $G$
has a basis
\begin{eqnarray*}
&&X_{1}=\frac{\partial}{\partial z_1},
 \;\; X_2=\frac{\partial}{\partial z_2},
 \;\; X_3=\frac{\partial}{\partial z_3},\qquad \qquad\qquad \\
&&X_4=\frac{\partial}{\partial z_4}+z_1\frac{\partial}{\partial z_2},
 \;\; X_5=\frac{\partial}{\partial z_5}+z_1\frac{\partial}{\partial z_3},\;\;
 X_6=\frac{\partial}{\partial z_6}+z_2\frac{\partial}{\partial z_3}+z_4\frac{\partial}{\partial z_5} \end{eqnarray*}
 whose only non--trivial relations are $[X_1,X_4]=X_2, [X_1,X_5]=X_3=[X_2,X_6], [X_4,X_6]=X_5$.
Consequently, $\mathbb{I}_6$ is holomorphically parallelizable,
a $6$--dimensional non--K\"{a}hler Calabi--Yau manifold
since it also does not satisfy the $\partial_{}\bar{\partial}$--lemma:
$$-w_1\wedge\bar{w_1}\wedge \bar{w_4}\notin\mathrm{im}\partial_{}\bar{\partial}$$
 while
$$
-w_1\wedge\bar{w_1}\wedge \bar{w_4}
=(\partial_{}+\bar{\partial})(w_1\wedge\bar{w_2})
\in\ker\, \partial_{}\cap \ker\, \bar{\partial}\cap \mathrm{im} \,(\partial_{}+\bar{\partial}).
$$
The Hodge diamond of $\mathbb{I}_6$ states as(see \cite[Appendix A]{CCYY22}):
\begin{center}
\footnotesize{\begin{tikzpicture}[scale=0.08]
\draw  (30,30) node[left] {$1$};
\draw  (25,25) node[left] {$6$};
\draw  (35,25) node[left] {$3$};
\draw  (20,20) node[left] {$15$};
\draw  (30,20) node[left] {$18$};
\draw  (40,20) node[left] {$5$};
\draw  (15,15) node[left] {$20$};
\draw  (25,15) node[left] {$45$};
\draw  (35,15) node[left] {$30$};
\draw  (45,15) node[left] {$6$};
\draw  (10,10) node[left]  {$15$};
\draw  (20,10) node[left]  {$60$};
\draw  (30,10) node[left]  {$75$};
\draw  (40,10) node[left]  {$36$};
\draw  (50,10) node[left]  {$5$};
\draw  (5,5) node[left] {$6$};
\draw  (15,5) node[left]  {$45$};
\draw  (25,5) node[left]  {$100$};
\draw  (35,5) node[left] {$90$};
\draw  (45,5) node[left]  {$30$};
\draw  (55,5) node[left]  {$3$};
\draw  (0,0) node[left] {$1$};
\draw  (10,0) node[left]  {$18$};
\draw  (20,0) node[left]  {$75$};
\draw  (30,0) node[left]  {$120$};
\draw  (40,0) node[left]  {$75$};
\draw  (50,0) node[left]  {$18$};
\draw  (60,0) node[left] {$1$};
\draw  (5,-5) node[left] {$3$};
\draw  (15,-5) node[left]  {$30$};
\draw  (25,-5) node[left]  {$90$};
\draw  (35,-5) node[left] {$100$};
\draw  (45,-5) node[left]  {$45$};
\draw  (55,-5) node[left]  {$6$};
\draw  (10,-10) node[left]  {$5$};
\draw  (20,-10) node[left]  {$36$};
\draw  (30,-10) node[left]  {$75$};
\draw  (40,-10) node[left]  {$60$};
\draw  (50,-10) node[left]  {$15$};
\draw  (15,-15) node[left] {$6$};
\draw  (25,-15) node[left] {$30$};
\draw  (35,-15) node[left] {$45$};
\draw  (45,-15) node[left] {$20$};
\draw  (20,-20) node[left] {$5$};
\draw  (30,-20) node[left] {$18$};
\draw  (40,-20) node[left] {$15$};
\draw  (25,-25) node[left] {$3$};
\draw  (35,-25) node[left] {$6$};
\draw  (30,-30) node[left] {$1$};
\end{tikzpicture}}
\end{center}

Here we only consider some special holomorphic Poisson structures on
$\mathbb{I}_6$ given by $G$--left--invariant holomorphic bi--vector fields:
$$
\pi_{1}=X_2\wedge X_3,\;\;\;
\pi_{2}=X_1\wedge X_3.
$$
Akin to the Iwasawa manifold, the holomorphic Koszul--Brylinski homology of
$\mathbb{I}_6$ can be computed in terms of the total cohomology of the double complex
$(\wedge^{\bullet,\bullet}\mathfrak{g}^{\ast}_{\mathbb{C}},\partial_\pi,\bar{\partial})$.
For the simplicity, we write
$w^{i_{1}\cdots i_{p}\bar{j_{1}}\cdots \bar{j_{q}}}
=w^{i_{1}}\wedge \cdots \wedge w^{i_{p}}\wedge w^{\bar{j_{1}}}\wedge\cdots\wedge w^{\bar{j_{q}}}$,
for any $1\leq p,q\leq6$.

\subsubsection{$\partial_{\pi}\bar{\partial}$--lemma on $(\mathbb{I}_{6},\pi_1)$}
Consider the holomorphic Poisson manifold $(\mathbb{I}_{6},\pi_1)$.
The only possible candidates of non--closed elements are
\begin{eqnarray*}
     \partial_{\pi_1}w^{23i_{1}\cdots i_{p-2}\bar{j_{1}}\cdots \bar{j_{q}}}&=&
     (\iota_{\pi_1}\circ \partial-\partial\circ \iota_{\pi_1})w^{23i_{1}\cdots i_{p-2}\bar{j_{1}}\cdots \bar{j_{q}}}\\
     &=&\iota_{\pi_1}(w^{23}\wedge \partial w^{i_{1}\cdots i_{p-2}\bar{j_{1}}\cdots \bar{j_{q}}})
     -\partial w^{i_{1}\cdots i_{p-2}\bar{j_{1}}\cdots \bar{j_{q}}}\\
          &=&\partial w^{i_{1}\cdots i_{p-2}\bar{j_{1}}\cdots \bar{j_{q}}}
     -\partial w^{i_{1}\cdots i_{p-2}\bar{j_{1}}\cdots \bar{j_{q}}}=0.
\end{eqnarray*}
This means the Dolbeault--Koszul--Brylinski spectral sequence of $(M,\pi_1)$ degenerates at $E_1$--page.
Analogous to $\mathbb{I}_3$,
$(\mathbb{I}_6,\pi_1)$ does not satisfy the $\partial_{\pi}\bar{\partial}$--lemma since
$\mathrm{im}\partial_{\pi}\bar{\partial}=\emptyset$, while
$$
-\bar{w_1}\wedge \bar{w_4}=(\partial_{\pi}+\bar{\partial})(\bar{w_2})
\in\ker\, \partial_{\pi}\cap \ker\, \bar{\partial}\cap \mathrm{im} \,(\partial_{\pi}+\bar{\partial})
\neq\emptyset.
$$

\subsubsection{$\partial_{\pi}\bar{\partial}$--lemma on $(\mathbb{I}_{6},\pi_2)$}
Consider the holomorphic Poisson manifold $(\mathbb{I}_{6},\pi_2)$.
Note that $\mathfrak{g}^{6,0}=\langle w^{123456}\rangle$,
  $\partial_{\pi_2} \mathfrak{g}^{6,0}=0$,  $\bar{\partial} \mathfrak{g}^{6,0}=0$,
thus we have
  \begin{center}
  $H_{0}(X, {\pi_2})=\langle [w^{123456}]\rangle=\mathbb{C}.$
  \end{center}
Furthermore, observe that $\mathfrak{g}^{5,0}
=\langle w^{23456},w^{13456},w^{12456},w^{12356},w^{12346},w^{12345}\rangle$,
$\bar{\partial} \mathfrak{g}^{5,0}=0$,
and the only non--closed monomial of $\mathfrak{g}^{5,0}$ under $\partial_{\pi_{2}}$ is
$
\partial_{\pi_{2}} w^{12356}
         =-w^{1456}.
$
Meanwhile, since $\mathfrak{g}^{6,1}
=\langle w^{123456\bar{1}},w^{123456\bar{2}},w^{123456\bar{3}},
w^{123456\bar{4}},w^{123456\bar{5}},w^{123456\bar{6}}\rangle$,
$\partial_{\pi_2} \mathfrak{g}^{6,1}=0$,
and the only non--closed monomials of $\mathfrak{g}^{6,1}$ under $\bar{\partial}$ are:
$$\bar{\partial}w^{123456\bar{2}}=-w^{123456\bar{1}\bar{4}}, \quad
    \bar{\partial}w^{123456\bar{3}}=-w^{123456\bar{1}\bar{5}}-w^{\bar{2}\bar{6}},\quad
     \bar{\partial}w^{123456\bar{5}}=-w^{123456\bar{4}\bar{6}},
$$
Consequently, we have that
\begin{multline*}
  H_{1}(\mathbb{I}_{6},{\pi_2})= \\
  \langle [w^{23456}],[w^{13456}],[w^{12456}],[w^{12346}],[w^{12345}],
[w^{123456\bar{1}}],[w^{123456\bar{4}}],[w^{123456\bar{6}}]\rangle=\mathbb{C}^{8}.
\end{multline*}
But $6+3>8$,
by the Lemma \ref{deg},
this means the Dolbeault--Koszul--Brylinski spectral sequence of $(\mathbb{I}_6,\pi_2)$
does not degenerate at $E_1$--page.

\subsubsection{Two submanifolds of $(\mathbb{I}_{6},\pi_2)$}
Let
$$\Gamma_1=
\Bigg\{\small{\left(\begin{array}{cccc}
1 &z_{1} & z_{2} & z_{3}\\
0 &1 & z_{4} & a_{24}\\
0 & 0 & 1  & a_{34} \\
0& 0 & 0 & 1
\end{array}
\right)}\mid z_{i}\in \mathbb{C},a_{24},a_{34}\in\mathbb{Z}[i] \Bigg\}.
$$
Naturally $(Y_1=\Gamma_1/H,\pi_2|_{Y_1}=X_1\wedge X_3)$ is a 4--dimensional nilmanifold,
a closed holomorphic Poisson submanifold of $\mathbb{I}_{6}$.
With the same arguments, one can check
that the Dolbeault--Koszul--Brylinski spectral sequence of  $(Y_1,\pi_3|_{Y_1})$  do not degenerate at $E_1$ page.

Take
$$\Gamma_2=
\Big\{A= \small{\left(\begin{array}{cccc}
1 &z_{1} & z_{2} & z_{3}\\
0 &1 & a_{23} & a_{24}\\
0 & 0 & 1  & a_{34} \\
0& 0 & 0 & 1
\end{array}
\right)}\mid z_{i}\in \mathbb{C},a_{23},a_{24},a_{34}\in\mathbb{Z}[i] \Big\}.
$$
Naturally $(Y_2=\Gamma_2/H,\pi_2|_{Y_2}=X_1\wedge X_3)$ is a complex 3--torus,
a closed holomorphic Poisson submanifold of $\mathbb{I}_{6}$.
Hence by Corollary \ref{Kahler},
the Dolbeault--Koszul--Brylinski spectral sequence of $(Y_2,\pi_3|_{Y_2})$
degenerates at $E_1$ page
while the one of $(\mathbb{I}_{6}, \pi_2)$ does not.

\subsection{Nakamura manifold}\label{Nak}

In this subsection, based on \cite[Example 3.4]{AK17}, we consider a special solvmanifold:
the (holomorphically parallelizable) Nakamura manifold.
Consider the complex Lie group
$G:=\mathbb{C}\ltimes_\phi\mathbb{C}^2$
where $\phi(z)=
\left(\begin{array}{cccc}
e^z &0 \\
0 &e^{-z}
\end{array}
\right)\in SL(2,\mathbb{C}).
$
There exist $a+\sqrt{-1}b\in \mathbb{C}$ and $c+\sqrt{-1}d\in \mathbb{C}$
such that $\mathbb{Z}(a+\sqrt{-1}b)+\mathbb{Z}(c+\sqrt{-1}d)$
is a lattice in $\mathbb{C}$ and $\phi(a+\sqrt{-1}b)$ and $\phi(a+\sqrt{-1}b)$
are conjugate to elements of $SL(4,\mathbb{Z})$,
where we regard $SL(2,\mathbb{C})\subset SL(4,\mathbb{R})$. Hence
there exists a lattice
$\Gamma=(\mathbb{Z}(a+\sqrt{-1}b)+\mathbb{Z}(c+\sqrt{-1}d))\ltimes_\phi\Gamma_{\mathbb{C}^2}$
of $G$, where $\Gamma_{\mathbb{C}^2}$ is a lattice of $\mathbb{C}^2$.
The holomorphically parallelizable solvmanifold $M=G/\Gamma$ is called
(holomorphically parallelizable) Nakamura manifold.

Denote by $\mathfrak{g}_{\mathbb{C}}$ the complexification
of the Lie algebra $\mathfrak{g}$ of $G$.
Once taking the coordinate $(z_1,z_2,z_3)$ of $G$,
we obtain a basis
$\{
X_1=\frac{\partial}{\partial z_1},
X_2=e^{z_1}\frac{\partial}{\partial z_2},
X_3=e^{-z_1}\frac{\partial}{\partial z_3}
\}$
of $(\mathfrak{g}_{\mathbb{C}})^{1,0}$ with Lie bracket
$$
[X_1,X_2]=X_2,
[X_1,X_3]=-X_3,
[X_2,X_3]=0.
$$
Meanwhile, the dual basis of the space $(\mathfrak{g}_{\mathbb{C}}^*)^{1,0}$ is
$\{
w_1=dz_1,
w_2=e^{-z_1}dz_2,
w_3=e^{z_1}dz_3
\}$
with the structure equations
$$
dw_1=0, dw_2=-w_1\wedge w_2, dw_3=w_1\wedge w_3.
$$
Therefore the $G$--left--invariant holomorphic bi--vector fields
$\pi_{12}=X_1\wedge X_2$ and $\pi_{23}=X_2\wedge X_3$
are Poisson.
Moreover, there exists a finite--dimensional subcomplex $B_\Gamma^{\bullet}$
of $(A^{0,\bullet}_M,\bar{\partial})$ such that
the inclusions $\iota: B_\Gamma^{\bullet}\hookrightarrow (A^{0,\bullet}_M,\bar{\partial})$
and
$$\iota: \wedge^{\bullet}(\mathfrak{g}_{\mathbb{C}}^*)^{1,0}\otimes B_\Gamma^{\bullet}
\hookrightarrow (A^{\bullet,\bullet}_M,\bar{\partial})$$
are quasi--isomorphisms.
Analogous to the Lemma \ref{nil},
we have that
the holomorphic Koszul--Brylinski homology of $(M,\pi)$ can be
computed in terms of the total cohomology of the double complex
$(\wedge^{\bullet}(\mathfrak{g}_{\mathbb{C}}^*)^{1,0}\otimes B_\Gamma^{\bullet},
\partial_{\pi}, \bar{\partial})$ if $\pi$ is $G$--left--invariant.
However, the Dolbeault cohomology of $M$ depends on the lattices $\Gamma$.
More precisely, the subcomplex $B_\Gamma^{\bullet}$ depends on two distinguished cases:
\begin{itemize}
\item[(1)]Both $b\in \mathbb{Z}\pi$ and $d\in \mathbb{Z}\pi$.
In this case,
$
B_\Gamma^{\bullet}=\wedge^\bullet \mathbb{C}\langle d\bar{z_1},
e^{-z_1}d\bar{z_2},e^{z_1}d\bar{z_3}\rangle
$ ;
\item[(2)]Either $b\notin \mathbb{Z}\pi$ or $d\notin \mathbb{Z}\pi$.
In this case,
\begin{eqnarray*}
  B_\Gamma^{1} &=& \mathbb{C}\langle d\bar{z_1}\rangle, \\
  B_\Gamma^{2} &=& \mathbb{C}\langle d\bar{z_2}\wedge d\bar{z_3}\rangle, \\
  B_\Gamma^{3} &=& \mathbb{C}\langle d\bar{z_1}\wedge d\bar{z_2}\wedge d\bar{z_3}\rangle.
\end{eqnarray*}
\end{itemize}
Note for both two cases, the Dolbeault operator $\bar{\partial}$ on $B_\Gamma^{\bullet}$
is trivial.
Therefore we have that the Nakamura manifold $M$ always do not satisfy the $\partial_{}\bar{\partial}$--lemma
since $\mathrm{im}\partial_{}\bar{\partial}=\emptyset$, while
$$
-w_1\wedge w_2\wedge \bar{w_1}
=(\partial_{}+\bar{\partial})(w_2\wedge\bar{w_1})
\in\ker\, \partial_{}\cap \ker\, \bar{\partial}\cap \mathrm{im} \,(\partial_{}+\bar{\partial}).
$$
Furthermore, if we consider $(M, \pi_{12}=X_1\wedge X_2)$,
one can check that $(M, \pi_{12})$ do not satisfy the $\partial_{\pi}\bar{\partial}$--lemma since
$\mathrm{im}\partial_{\pi_{12}}\bar{\partial}=\emptyset$, while
$$
-1=(\partial_{\pi_{12}}+\bar{\partial})(w_2)
\in\ker\, \partial_{\pi}\cap \ker\, \bar{\partial}\cap \mathrm{im} \,(\partial_{\pi}+\bar{\partial})
\neq\emptyset.
$$
However, if we consider $(M, \pi_{23}=X_2\wedge X_3)$,
then by the direct calculations, we have that on
$\wedge^{\bullet}(\mathfrak{g}_{\mathbb{C}}^*)^{1,0}\otimes B_\Gamma^{\bullet}$,
both differentials
$\partial_{\pi}$ and $\bar{\partial}$ are trivial,
hence $(M, \pi_{23})$ satisfies the $\partial_{\pi}\bar{\partial}$--lemma.


\end{document}